\newtheorem{theorem}{Theorem}
\newtheorem{proposition}{Proposition}
\newtheorem{corollary}{Corollary}
\DeclareMathOperator{\M}{\mathcal{M}}
\DeclareMathOperator{\A}{\mathcal{A}}
\DeclareMathOperator{\N}{\mathbb{N}}
\DeclareMathOperator{\Z}{\mathbb{Z}}
\DeclareMathOperator{\R}{\mathbb{R}}
\DeclareMathOperator{\C}{\mathbb{C}}
\DeclareMathOperator{\Ma}{\bf M}
\DeclareMathOperator{\Aa}{\textbf{A}}
\DeclareMathOperator{\Tt}{\mathbb{T}}
\begin{document}

	\title[The higher regularity of  the maximal function]
{The higher regularity of  the discrete Hardy-Littlewood maximal function}

\author{Faruk Temur, Hİkmet Burak Özcan}
\address{Department of Mathematics\\
	Izmir Institute of Technology, Urla, Izmir, 35430,
	Turkey}
\email{faruktemur@iyte.edu.tr, hikmetozcan@iyte.edu.tr }
\keywords{Boundedness of variation, discrete maximal function, second derivative, subset sum problem}
\subjclass[2020]{Primary: 42B25; Secondary: 46E35, 68R05}
\date{April 6, 2025}

\begin{abstract}
	In a  recent  note the first author   gave  the first positive result on the  second order regularity of the  discrete noncentered Hardy-Littlewood maximal function. In this article we conduct a thorough investigation of possible similar results for higher order derivatives. We uncover that such results are indeed a consequence of a stronger phenomenon regarding the growth of $l^p(\Z)$ norms of the derivatives of characteristic functions of finite subsets of $\Z$. Along the way we discover very interesting connections to Remez-type inequalities for exponential sums,   zeros of complex polynomials with restricted coefficients (Littlewood-type polynomials), and to subset sum problem of computer science.
\end{abstract}

\maketitle

\section{Introduction}\label{intro}

Regularity of the Hardy-Littlewood maximal function has become a focus of intense interest in the last 40 years,  see the survey article \cite{ct}. Despite the vast literature  developed on this problem, almost all  efforts  concentrated on the first order regularity, mainly because higher order regularity results of the same generality are not possible, see \cite{ko,jw}. However, 
in a recent article \cite{tem3} the first author obtained the following result, which was the first positive result for higher order regularity, and demonstrated that positive results are possible for more restricted classes of functions: 
\begin{equation}\label{pos1}
	\|(\Ma^d \chi_A)''  \|_{l^p(\Z)}  \leq2^{1-\frac{1}{p}} 3^{\frac{1}{p}} \|\chi_A''\|_{l^p(\Z)},  
\end{equation}
for any finite subset $A\subset \Z$ and its characteristic function $\chi_A$, and $1\leq p\leq \infty$. Here, $\Ma^d f$ is the discrete noncentered Hardy-Littlewood maximal function  of $f:\mathbb{Z} \rightarrow \mathbb{R}$ defined by  	
\begin{equation*}
	\Ma^df(n):=\sup_{r,s\in \Z^+} \Aa_{r,s}|f|(n),  \quad   \text{where}   \quad 	\Aa^d_{r,s}f(n)=\frac{1}{r+s+1}\sum_{j=-r}^{s}f(n+j),
\end{equation*}
with $\Z^+$ denoting the nonnegative integers. In this  discrete context regularity is studied using discrete derivatives  -also called forward differences- defined by 
\begin{equation*}
	\begin{aligned}
		f^{(k)}(n)&:= \sum_{j=0}^{k}{k \choose j}(-1)^{j}f(n+k-j), \qquad k\geq 1.
	\end{aligned}
\end{equation*}

But  it
  is possible to prove \eqref{pos1} in a more general way. We have
\begin{theorem}
	Let $A$ be a finite subset of integers, and $\chi_A$ be its characteristic function.  Then for any $k\geq 1$ and $1\leq p\leq \infty$
	\begin{equation*}\label{pos-1}
		\|\chi_A^{(k)}\|_{l^p(\Z)}  \geq C_{k,p} \|\chi_A'\|_{l^p(\Z)}.
	\end{equation*}
\end{theorem}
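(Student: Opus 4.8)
The plan is to translate the problem into one about the coefficients of a polynomial and then to locate the single source of genuine cancellation. Let $P_A(x)=\sum_{a\in A}x^{a}$ be the generating polynomial of $\chi_A$. A direct computation from the definition of the forward difference gives that the generating function of $\chi_A^{(k)}$ is $x^{-k}(1-x)^{k}P_A(x)$; the factor $x^{-k}$ is a shift and does not affect any $l^p$ norm, so $\|\chi_A^{(k)}\|_{l^p(\Z)}$ equals the $l^p$ norm of the coefficient sequence of $(1-x)^{k}P_A(x)$. Writing $Q(x):=(1-x)P_A(x)$, the case $k=1$ identifies $\|\chi_A'\|_{l^p(\Z)}$ with $\|Q\|_{l^p}$ (the $l^p$ norm of the coefficients of $Q$), so the theorem becomes the assertion
\begin{equation*}
\|(1-x)^{k-1}Q\|_{l^p}\;\geq\;C_{k,p}\,\|Q\|_{l^p}.
\end{equation*}
Here $Q$ is highly structured: if $A=\bigcup_{j=1}^{m}[a_j,b_j]$ is the decomposition of $A$ into maximal runs, telescoping yields $Q(x)=\sum_{j=1}^{m}\bigl(x^{a_j}-x^{b_j+1}\bigr)$, a polynomial whose $2m$ nonzero coefficients equal $\pm1$ and alternate in sign along increasing exponents. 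Thus $\|Q\|_{l^p}=(2m)^{1/p}$ for $p<\infty$, and the goal is to show that multiplying this sparse $\pm1$ (Littlewood-type) polynomial by $(1-x)^{k-1}$ cannot shrink its $l^p$ norm by more than a factor depending only on $k$ and $p$.

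Next I would record the elementary cases using the superposition picture. The coefficient vector of $(1-x)^{k-1}$ is the fixed length-$k$ block $\bigl(\binom{k-1}{i}(-1)^{i}\bigr)_{i=0}^{k-1}$, so $(1-x)^{k-1}Q$ is a sum of $2m$ signed shifts of this block, one at each boundary exponent. At the smallest exponent appearing in $Q$ no other block can reach, so the corresponding coefficient of $(1-x)^{k-1}Q$ is exactly $\pm1$; hence $\|(1-x)^{k-1}Q\|_{l^\infty}\geq1$ and the case $p=\infty$ follows with $C_{k,\infty}=1$. Likewise, if the boundary exponents are pairwise separated by at least $k$ the blocks do not overlap and $\|(1-x)^{k-1}Q\|_{l^p}^{p}=2m\sum_{i=0}^{k-1}\binom{k-1}{i}^{p}\geq2m$, settling every $p$ in that regime with constant $1$. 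All the difficulty therefore lies in the range $p<\infty$ when many boundary points cluster, the blocks overlap, and near-cancellation is conceivable.

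To anchor the estimate I would first establish $p=2$ by Plancherel. Writing $Q(e^{i\theta})$ for the attached $\pm1$ exponential sum, one has $\|(1-x)^{k-1}Q\|_{l^2}^{2}=\frac{1}{2\pi}\int_{-\pi}^{\pi}\bigl(4\sin^{2}(\theta/2)\bigr)^{k-1}\,|Q(e^{i\theta})|^{2}\,d\theta$ while $\|Q\|_{l^2}^{2}=\frac{1}{2\pi}\int_{-\pi}^{\pi}|Q(e^{i\theta})|^{2}\,d\theta=2m$. Since $4\sin^{2}(\theta/2)$ vanishes only at $\theta=0$, splitting the circle at $|\theta|=\delta$ shows the ratio of these integrals is bounded below, \emph{provided} the measure $|Q(e^{i\theta})|^{2}\,d\theta$ does not place almost all of its mass in a neighbourhood $(-\delta,\delta)$ of the origin. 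This is precisely a Turán/Remez-type non-concentration statement for a $\pm1$-coefficient exponential sum, and it is exactly where I expect the main obstacle: the threshold $\delta$ must be chosen \emph{uniformly in the number of terms} $2m$, so any bound that degrades with the degree or the number of frequencies is useless. Ruling this out forces one to use that $Q$ is of Littlewood type and so cannot be uniformly small on an arc of fixed length; this is the origin of the connections to Remez inequalities and to zeros of restricted-coefficient polynomials.

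Finally, for general $p<\infty$ the Fourier identity is unavailable and I would argue directly with the coefficients $c_n$ of $(1-x)^{k-1}Q$, where the task is to bound $\sum_n|c_n|^{p}$ below by a constant multiple of the number $2m$ of sources. Each $c_n$ is a signed sum of block values $\binom{k-1}{i}$ collected from the boundary points within distance $k$ of $n$, and asking whether all of these superposed sums can be made simultaneously small is a combinatorial cancellation problem of subset-sum type; this is where that circle of ideas enters. The plan is to combine the trivial separated regime, the $p=2$ anchor, and this combinatorial analysis into a lower bound valid for all $1\leq p\leq\infty$, with the uniform non-concentration estimate for Littlewood-type sums remaining the crux throughout.
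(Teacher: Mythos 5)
Your reformulation is correct, and the easy cases you settle check out: the reduction to $Q=(1-x)P_A$ with $2m$ nonzero coefficients $\pm 1$, the identity $\|\chi_A'\|_{l^p(\Z)}=(2m)^{1/p}$, the case $p=\infty$ via the lowest coefficient, and the regime where boundary points are $k$-separated so the blocks do not overlap. But the theorem is precisely about the remaining case --- $p<\infty$ with clustered boundary points --- and there your plan has a genuine gap: both the $p=2$ ``anchor'' and the general-$p$ step are reduced to a uniform non-concentration statement for Littlewood-type sums (that $\int_{|\theta|>\delta}|Q(e^{i\theta})|^2\,d\theta\gtrsim_\delta m$ with $\delta$ and the implied constant independent of $m$ and of the configuration of $A$), which you explicitly leave unproven. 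This is not a routine missing detail: the known Remez-type inequalities of exactly this kind (Nazarov--Tur\'an, Borwein--Erd\'elyi) carry constants that degrade exponentially in the number of frequencies, which is why in this paper the Fourier route only yields the non-uniform Theorems 3--5 (fixed $A$, $k\to\infty$), and why the uniform exponential bound \eqref{qu} is posed as an open question rather than proved. So as written, your argument establishes the theorem only for $p=\infty$ and for well-separated sets, and for the general case it replaces the theorem by a harder, possibly open, problem. There is also a structural issue: lower bounds of this form do not interpolate, so even granting the $p=2$ estimate, combining it with $p=\infty$ would not cover $1\le p<2$ or $2<p<\infty$; the ``combinatorial subset-sum analysis'' you invoke there is a plan, not an argument.

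What you are missing is that no global (Fourier) non-concentration is needed: a purely local rigidity statement suffices. If $\chi_A^{(k)}$ vanishes at the $2k+1$ consecutive points $n,n+1,\dots,n+2k$, then $\chi_A^{(m)}(n)=0$ for all $k\le m\le 3k$, so the discrete Taylor expansion shows that for $h\in\{k-1,k,\dots,3k\}$ the value $\chi_A(n+h)$ agrees with a polynomial $P(h)$ of degree at most $k-1$; since $P$ takes only the values $0$ and $1$ at these $2k+2$ points, the polynomial $P(P-1)$, of degree at most $2k-2$, has $2k+2$ roots, hence $P$ is constant --- impossible if the window contains a point of $\partial_rA\cup(\partial_lA-1)$, where $\chi_A$ changes value. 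Partitioning $\Z$ into intervals of length $2k+1$ and pigeonholing the $2|\partial_rA|$ boundary points then gives at least $2|\partial_rA|/(2k+1)$ intervals on each of which the integer-valued function $\chi_A^{(k)}$ is somewhere at least $1$ in absolute value, whence
\begin{equation*}
\|\chi_A^{(k)}\|_{l^p(\Z)}^p\;\ge\;\frac{2|\partial_rA|}{2k+1}\;=\;\frac{1}{2k+1}\,\|\chi_A'\|_{l^p(\Z)}^p .
\end{equation*}
This is the paper's second proof, giving $C_{k,p}=(2k+1)^{-1/p}$ (a cruder variant, iterating integrality and the bound $\|\chi_A^{(k-2)}\|_{l^\infty(\Z)}\le 2^{k-2}$ on intervals of length $2^k$, gives $2^{-k/p}$). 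Your coefficient/block picture is fully compatible with this, but the decisive step is the local polynomial argument, not a Remez inequality; if you want to pursue the Fourier route, know that it is exactly the path the paper takes to its non-uniform results, and the uniformity you need is the open problem, not a lemma.
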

Then as a corollary we have
\begin{corollary}
	Let $A$ be a finite subset of integers, and $\chi_A$ be its characteristic function.  Then for any $k\geq 1$ and $1\leq p\leq \infty$ we have 
	\begin{equation*}\label{}
		\|(\Ma^d \chi_A)^{(k)} \|_{l^p(\Z)}   \lesssim_{k,p} \|\chi_A^{(k)}\|_{l^p(\Z)} .  
	\end{equation*}
	\end{corollary}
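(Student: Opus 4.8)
The plan is to deduce the corollary from the second-order bound \eqref{pos1} and from the Theorem by a short chain of elementary reductions, so that all of the genuine difficulty stays inside the Theorem. The one crude tool I need is the trivial comparison
\[
\|h^{(m)}\|_{l^p(\Z)} \le 2^m \|h\|_{l^p(\Z)}, \qquad m \ge 0,
\]
valid for every $h$ and every $1 \le p \le \infty$: expanding $h^{(m)}$ by the binomial formula defining the forward differences and applying Minkowski's inequality together with the translation invariance of the $l^p(\Z)$ norm produces exactly the factor $\sum_{j=0}^m \binom{m}{j} = 2^m$.

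For $k \ge 2$ I would first write $(\Ma^d \chi_A)^{(k)} = \bigl((\Ma^d \chi_A)''\bigr)^{(k-2)}$ and apply the comparison above with $h = (\Ma^d \chi_A)''$ and $m = k-2$, giving $\|(\Ma^d \chi_A)^{(k)}\|_{l^p(\Z)} \le 2^{k-2}\|(\Ma^d \chi_A)''\|_{l^p(\Z)}$. Next I invoke \eqref{pos1} to pass from the maximal function to $\chi_A$, bounding the right-hand side by $2^{k-2}\,2^{1-1/p}3^{1/p}\|\chi_A''\|_{l^p(\Z)}$. Applying the same trivial comparison once more, now to $\chi_A' $, gives $\|\chi_A''\|_{l^p(\Z)} = \|(\chi_A')'\|_{l^p(\Z)} \le 2\|\chi_A'\|_{l^p(\Z)}$, which brings the whole estimate down to the first derivative of $\chi_A$. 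The final step is the Theorem, which yields $\|\chi_A'\|_{l^p(\Z)} \le C_{k,p}^{-1}\|\chi_A^{(k)}\|_{l^p(\Z)}$ and thereby replaces the first derivative with the desired right-hand side. Collecting constants produces $\|(\Ma^d \chi_A)^{(k)}\|_{l^p(\Z)} \le 2^{k-1}\,2^{1-1/p}3^{1/p}\,C_{k,p}^{-1}\,\|\chi_A^{(k)}\|_{l^p(\Z)}$, an admissible $k,p$-dependent constant. (For $k=2$ the chain is not needed at all, since the claim is then exactly \eqref{pos1}.) The remaining case $k=1$ is not reached by this route: there the asserted inequality is precisely the first-order $l^p$-regularity of $\Ma^d$ on characteristic functions, which I would take from the known first-order theory as the base case.

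As for where the weight of the argument sits: the corollary is essentially formal once the Theorem is available, because every link in the chain other than the Theorem is either the crude $2^m$ comparison or the already-established \eqref{pos1}. The only point demanding care is the direction of the reductions. The Theorem controls only $\|\chi_A'\|_{l^p(\Z)}$ from above by $\|\chi_A^{(k)}\|_{l^p(\Z)}$; it offers no comparable upper bound for $\|\chi_A''\|_{l^p(\Z)}$. Hence one must deliberately descend from $\|\chi_A''\|_{l^p(\Z)}$ back to $\|\chi_A'\|_{l^p(\Z)}$ via the trivial comparison before the Theorem can be applied, rather than trying to match $\|\chi_A''\|_{l^p(\Z)}$ against $\|\chi_A^{(k)}\|_{l^p(\Z)}$ directly. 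With that ordering fixed, the entire substance of the result is carried by the Theorem's lower bound $\|\chi_A^{(k)}\|_{l^p(\Z)} \ge C_{k,p}\|\chi_A'\|_{l^p(\Z)}$, whose proof is where the connections to Remez-type inequalities for exponential sums and to the subset sum problem must enter.
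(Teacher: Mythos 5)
Your proposal is correct: every step checks out, the constants collect as you say, and your remark about the direction of the reductions (descend to $\|\chi_A'\|_{l^p(\Z)}$ \emph{before} invoking the Theorem, since the Theorem only bounds the first derivative from above by $\|\chi_A^{(k)}\|_{l^p(\Z)}$) is exactly the right point of care. The skeleton is the same as the paper's — a crude differencing bound, a known regularity result to pass from the maximal function to $\chi_A$, then Theorem 1 to climb back up — but you pivot at a different place. The paper descends all the way to the \emph{first} derivative, writing $\|(T\chi_A)^{(k)}\|_{l^p(\Z)} \le 2^{k-1}\|(T\chi_A)'\|_{l^p(\Z)}$ and then using the first-order bound \eqref{pos41} (from the cited work of Bober--Carneiro--Hughes--Pierce) before applying Theorem 1; this treats all $k\ge 1$ uniformly, never uses \eqref{pos1}, and is stated for an \emph{arbitrary} operator $T$ satisfying \eqref{pos41} — which is precisely how the paper also obtains Corollary 3 for the centered maximal function $\M^d$ at no extra cost. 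Your route instead pivots at the second derivative via \eqref{pos1}, which buys you a fully explicit constant $2^{k-1}\,2^{1-1/p}3^{1/p}\,C_{k,p}^{-1}$ for $k\ge 2$, but at two costs: the argument becomes specific to $\Ma^d$ (since \eqref{pos1} is), so it would not give the centered analogue without substituting a different second-order input; and you still must import the first-order theory anyway to handle $k=1$, so your proof uses strictly more external ingredients than the paper's, whose chain through \eqref{pos41} subsumes your base case and the general case simultaneously.
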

Here the notation $A\lesssim_{a,b,c} B$ means $A\leq C_{a,b,c}B$, and we will also use the analogous notation $A\gtrsim_{a,b,c}B$. This corollary  is immediately obtained even for  an arbitrary operator $T$: if we know
\begin{equation}\label{pos41}
	\|(T\chi_A)'  \|_{l^p(\Z)}   \lesssim_p \|\chi_A'\|_{l^p(\Z)} , 
\end{equation}
 then by our Theorem 1  
\begin{equation*}\label{pos51}
	\|(T \chi_A)^{(k)} \|_{l^p(\Z)}   \leq   2^{k-1}\|(T \chi_A)' \|_{l^p(\Z)}  \lesssim_{k,p}   \|\chi_A'\|_{l^p(\Z)} \lesssim_{k,p}       \|\chi_A^{(k)}\|_{l^p(\Z)} .  
\end{equation*}
For $\Ma^d$,  \eqref{pos41} follows from  \cite{bchp} and properties of characteristic functions. For the centered discrete Hardy-Littlewood maximal function 
\begin{equation*}
	\M^d f(x):=\sup_{r\in \Z^+} \A^d_r|f|(x),  \quad   \text{where}   \quad 	\A_{r}^df(n):=\frac{1}{2r+1}\sum_{j=-r}^{r}f(n+j), 
\end{equation*} 
  we know  \eqref{pos41} by \cite{tem}  and properties of characteristic functions. 
Therefore, 
\begin{corollary}
	Let $A$ be a finite subset of integers, and $\chi_A$ be its characteristic function.  Then for any $k\geq 1$ and $1\leq p\leq \infty$ we have 
	\begin{equation*}\label{}
		\|(\M^d \chi_A)^{(k)} \|_{l^p(\Z)}   \lesssim_{k,p} \|\chi_A^{(k)}\|_{l^p(\Z)}.  
	\end{equation*}
\end{corollary}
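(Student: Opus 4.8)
The plan is to read the corollary as the instance $T=\M^d$ of the general reduction displayed just before Corollary 2. By that reduction, the whole statement collapses to the single first-order bound \eqref{pos41} for the centered operator, namely
\[
\|(\M^d\chi_A)'\|_{l^p(\Z)}\ \lesssim_p\ \|\chi_A'\|_{l^p(\Z)},\qquad 1\le p\le\infty .
\]
Indeed, once this is in hand the three-term chain runs automatically: its first inequality $\|(\M^d\chi_A)^{(k)}\|_{l^p}\le 2^{k-1}\|(\M^d\chi_A)'\|_{l^p}$ is nothing but $k-1$ applications of the elementary estimate $\|h'\|_{l^p(\Z)}\le 2\|h\|_{l^p(\Z)}$ (valid since $h'(n)=h(n+1)-h(n)$ and the $l^p$ norm is translation invariant, and since $f^{(k)}=(f^{(k-1)})'$); its middle inequality is exactly \eqref{pos41}; and its final inequality is Theorem~1 rewritten as $\|\chi_A'\|_{l^p}\lesssim_{k,p}\|\chi_A^{(k)}\|_{l^p}$. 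So the only thing left to argue is \eqref{pos41} for $\M^d$.

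To establish \eqref{pos41} I would first record the endpoint $p=1$, which is precisely the variation estimate $\Var(\M^d f)\lesssim \Var(f)$ of \cite{tem}, where $\Var(f):=\|f'\|_{l^1(\Z)}$; specialized to $f=\chi_A$ it says $\|(\M^d\chi_A)'\|_{l^1(\Z)}\lesssim\|\chi_A'\|_{l^1(\Z)}$. The passage to general $p$ is then carried entirely by two structural features of characteristic functions. First, $\chi_A'$ takes values in $\{-1,0,1\}$, so $|\chi_A'(n)|^p=|\chi_A'(n)|$ for every $n$ and every $p$, whence $\|\chi_A'\|_{l^p(\Z)}^p=\|\chi_A'\|_{l^1(\Z)}$. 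Second, $0\le \M^d\chi_A\le 1$ pointwise, so every forward difference obeys $|(\M^d\chi_A)'(n)|\le 1$, i.e. $\|(\M^d\chi_A)'\|_{l^\infty(\Z)}\le 1$; this already disposes of $p=\infty$, since $\|\chi_A'\|_{l^\infty(\Z)}=1$ for nonempty $A$ (the case $A=\varnothing$ being trivial).

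For $1<p<\infty$ I would interpolate between these two facts at the pointwise level: applying $|a|^p=|a|\,|a|^{p-1}\le |a|\,\|(\M^d\chi_A)'\|_{l^\infty(\Z)}^{p-1}$ with $a=(\M^d\chi_A)'(n)$ and summing in $n$ gives
\[
\|(\M^d\chi_A)'\|_{l^p(\Z)}^p\ \le\ \|(\M^d\chi_A)'\|_{l^\infty(\Z)}^{p-1}\,\|(\M^d\chi_A)'\|_{l^1(\Z)}\ \lesssim\ \|\chi_A'\|_{l^1(\Z)}\ =\ \|\chi_A'\|_{l^p(\Z)}^p ,
\]
which is \eqref{pos41} with an implied constant depending only on $p$. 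Substituting this into the general chain yields the corollary for all $k\ge 1$ and all $1\le p\le\infty$. I expect no genuine obstacle in this assembly: the analytic depth is entirely absorbed by Theorem~1 and by the variation bound of \cite{tem}, and the one step that has to be spelled out — the upgrade from the $p=1$ endpoint to all exponents — is handled cleanly by the $\{0,1\}$-valuedness of $\chi_A$ together with the uniform bound $0\le\M^d\chi_A\le1$. The only points demanding mild care are the endpoint conventions at $p=1,\infty$ and the degenerate case $A=\varnothing$.
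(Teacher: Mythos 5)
Your proposal is correct and takes essentially the same route as the paper: the general three-term chain $\|(T\chi_A)^{(k)}\|_{l^p(\Z)}\le 2^{k-1}\|(T\chi_A)'\|_{l^p(\Z)}\lesssim_p\|\chi_A'\|_{l^p(\Z)}\lesssim_{k,p}\|\chi_A^{(k)}\|_{l^p(\Z)}$ specialized to $T=\M^d$, with the middle inequality \eqref{pos41} supplied by the variation bound of \cite{tem} combined with properties of characteristic functions. The only difference is one of explicitness: you spell out the upgrade from the $p=1$ endpoint to all $1\le p\le\infty$ (using that $\chi_A'$ is $\{-1,0,1\}$-valued and that $0\le\M^d\chi_A\le 1$), a step the paper subsumes under the phrase ``properties of characteristic functions.''
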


For small values of $k$, it is possible to obtain Theorem 1 via a simple observation on binomial coefficients. This argument works up to $k\leq 7$, but becomes unavailable thereafter.

We give two short proofs of Theorem 1, both relying on the  fact that if $\chi_A^{(k)}$ is zero for an interval of certain length, this will impose conditions on $\chi_A$ that it cannot satisfy as a characteristic function of a finite set. The first proof gives a constant $C_{k,p}=2^{-k/p}$, while the second gives a much better $(2k+1)^{-1/p}.$ 
A basic observation regarding endpoints of $A$  gives the exponential but nonuniform bound:

\begin{theorem}
	Let $A\subset \Z$ be a finite nonempty set. Let $1\leq p\leq \infty$. Let $n=\lfloor k/3\rfloor$. 	We have
	\begin{equation*}
		\|\chi_A^{(k)}\|_{l^p(\Z)}  \geq \frac{1}{3}{k \choose n}.
	\end{equation*}
\end{theorem}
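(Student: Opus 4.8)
The plan is to reduce everything to a pointwise lower bound. Since we work with counting measure on $\Z$, the $l^p(\Z)$ norms are nonincreasing in $p$, so $\|\chi_A^{(k)}\|_{l^p(\Z)}\ge \|\chi_A^{(k)}\|_{l^\infty(\Z)}$ for every $1\le p\le\infty$, and it suffices to exhibit a single integer $n_0$ with $|\chi_A^{(k)}(n_0)|\ge \frac13\binom{k}{n}$. The relevant endpoint is $b:=\max A$, and I would evaluate the $k$-th difference at $n_0=b-n$, where $n=\lfloor k/3\rfloor$.

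The point of choosing $n_0=b-n$ is that in $\chi_A^{(k)}(b-n)=\sum_{i=0}^{k}\binom{k}{i}(-1)^{k-i}\chi_A(b-n+i)$ every argument with $i>n$ exceeds $b$ and hence vanishes, while the surviving arguments $b-n+i$, $0\le i\le n$, run over $b-n,\dots,b$. Writing $h_j:=\chi_A(b-j)\in\{0,1\}$ and reindexing by $j=n-i$, one obtains (up to the global sign $(-1)^{k-n}$) the identity $\chi_A^{(k)}(b-n)=(-1)^{k-n}\sum_{j=0}^{n}(-1)^{j}\binom{k}{n-j}h_j$. The crucial feature is the endpoint value $h_0=\chi_A(b)=1$, which produces the leading term $\binom{k}{n}$; every other term carries an unknown $h_j\in\{0,1\}$, but only the odd-$j$ ones can reduce the modulus. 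Thus $|\chi_A^{(k)}(b-n)|\ge \binom{k}{n}-\sum_{j\ \mathrm{odd}}\binom{k}{n-j}=\binom{k}{n}-\big(\binom{k}{n-1}+\binom{k}{n-3}+\cdots\big)$.

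Everything now rests on the estimate $\binom{k}{n-1}+\binom{k}{n-3}+\cdots\le \frac23\binom{k}{n}$, and this is exactly where the choice $n=\lfloor k/3\rfloor$ enters: it guarantees $3n\le k$, hence the one-step ratio $\rho:=\binom{k}{n-1}/\binom{k}{n}=n/(k-n+1)$ satisfies $\rho<\frac12$. Since $i/(k-i+1)$ is increasing in $i$, each downward step multiplies a binomial coefficient by at most $\rho$, so consecutive terms of the odd-indexed sum have ratio at most $\rho^2$; summing the resulting geometric series gives $\binom{k}{n-1}/(1-\rho^2)=\binom{k}{n}\,\rho/(1-\rho^2)$, and $\rho\mapsto \rho/(1-\rho^2)$ is increasing with value $\frac23$ at $\rho=\frac12$. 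This yields the bound $\frac23\binom{k}{n}$ and therefore $|\chi_A^{(k)}(b-n)|\ge \frac13\binom{k}{n}$, completing the argument (the degenerate case $n=0$, i.e.\ $k\in\{1,2\}$, is already covered, since then the sum reads $|\chi_A^{(k)}(b)|=h_0=1\ge \frac13$). The main obstacle to anticipate is precisely this last estimate: a naive bound that discards the alternating signs and sums all smaller binomials only gives $\binom{k}{n}-\binom{k}{n}=0$, so one genuinely has to exploit that only the odd-indexed neighbours subtract and that, for $n\le k/3$, their step-two geometric decay is controlled by $\rho^2$ with $\rho<\frac12$.
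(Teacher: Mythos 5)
Your proposal is correct and is essentially the paper's own proof: both evaluate the $k$-th difference at $\max A-\lfloor k/3\rfloor$, use the vanishing of $\chi_A$ beyond $\max A$ to isolate the leading term $\binom{k}{n}$, and control the opposite-sign terms $\binom{k}{n-1}+\binom{k}{n-3}+\cdots$ by a geometric series with step ratio at most $\tfrac12$ (coming from $n\leq k/3$), which sums to at most $\tfrac23\binom{k}{n}$. The only cosmetic difference is that the paper bounds each $\binom{k}{n-2j-1}$ directly by $2^{-(2j+1)}\binom{k}{n}$, whereas you phrase the same estimate through the ratio $\rho=n/(k-n+1)$ and the function $\rho/(1-\rho^2)$.
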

 Stirling's formula gives an approximation like  $2^{0.918k}$ or $(1.88)^k$ for ${k \choose k/3}.$ So this is a good exponential bound, but as there is no term $	\|\chi_A'\|_p$ on the right hand side, it is not uniform.

 Direct calculations of the constant for simple sets lead us to believe that a uniform bound $C_{k,p}\geq2^{k-o_p(k)}$ may be possible. We will present these calculations in Section 2. Furthermore  the Fourier transform allows us to prove 
such bounds for fixed sets $A$. Applying the Nazarov-Tur\'{a}n bound of \cite{naz1} after using the Fourier transform we obtain

\begin{theorem}
	Let $A\subset \Z$ be a fixed finite and nonempty set,  and  $1\leq p\leq \infty$ be a fixed number. Then for $k$ large enough depending on $|A|$
	\begin{equation}\label{nb1}
	\|\chi_A^{(k)}\|_{l^p(\Z)}  \geq 	2^{k-1-\frac{|A|-1}{2}\log_2 2k}\Big(\frac{\sqrt{|A|-1}}{7\pi e^{3/2} } \Big)^{|A|-1}(k+1)^{-\frac{1}{p'}}|A|^{\frac{1}{p}},
	\end{equation}
where $p'$ is the dual exponent of $p$.	Also, for $k$ large enough depending on the size of the left boundary $|\partial_l A|$ of $A$,
	\begin{equation}\label{nb2}
		\|\chi_A^{(k+1)}\|_{l^p(\Z)}  \geq 	2^{k-1-\frac{2|\partial_l A|-1}{2}\log_2 2k}\Big(\frac{\sqrt{2|\partial_l A|-1}}{7\pi e^{3/2} } \Big)^{2|\partial_lA|-1}(k+1)^{-\frac{1}{p'}}  \big(2|\partial_{l}A|\big)^{\frac{1}{p}}.
	\end{equation}

\end{theorem}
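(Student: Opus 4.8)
The plan is to pass to the Fourier side, where the $k$-th difference becomes multiplication by $(e^{i\theta}-1)^k$, and then to exploit that this factor is comparable to its maximal value $2^k$ on a short arc around $\theta=\pi$, while the remaining exponential sum cannot be too small there by the Nazarov--Tur\'an bound of \cite{naz1}.

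First I would record the transform identity. Writing $\widehat{f}(\theta)=\sum_{n}f(n)e^{-in\theta}$, the definition of $f^{(k)}$ gives $\widehat{\chi_A^{(k)}}(\theta)=(e^{i\theta}-1)^k\,\widehat{\chi_A}(\theta)$, where $\widehat{\chi_A}(\theta)=\sum_{a\in A}e^{-ia\theta}$ is an exponential sum with exactly $|A|$ terms. For \eqref{nb2} I would instead decompose $A$ into its maximal runs $A=\bigsqcup_{i=1}^{M}[\alpha_i,\beta_i]$ with $M=|\partial_l A|$, so that $\chi_A'$ is supported on the $2M$ points $\alpha_i-1$ (with value $+1$) and $\beta_i$ (with value $-1$); hence $\widehat{\chi_A^{(k+1)}}(\theta)=(e^{i\theta}-1)^k\widehat{\chi_A'}(\theta)$ with $\widehat{\chi_A'}(\theta)=\sum_{i=1}^{M}\bigl(e^{-i(\alpha_i-1)\theta}-e^{-i\beta_i\theta}\bigr)$ an exponential sum having exactly $2M=2|\partial_l A|$ distinct frequencies. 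This term count is the source of the exponents $|A|-1$ and $2|\partial_l A|-1$ in the two displays.

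Next I would reduce the $l^p$ lower bound to an $L^{p'}$ estimate on the circle by Hausdorff--Young, which for $1\le p\le 2$ gives $\|\chi_A^{(k)}\|_{l^p}\ge\bigl(\tfrac{1}{2\pi}\int_{-\pi}^{\pi}|\widehat{\chi_A^{(k)}}(\theta)|^{p'}\,d\theta\bigr)^{1/p'}$; the range $2\le p\le\infty$ requires separate but routine bookkeeping, e.g.\ via the Parseval identity at $p=2$ together with duality, which is a technical rather than conceptual point. I would then restrict the integral to an arc $I=[\pi-\delta,\pi+\delta]$ with $\delta\asymp k^{-1/2}$, on which $|e^{i\theta}-1|^k=\bigl(2\cos\tfrac{\pi-\theta}{2}\bigr)^k\ge c\,2^k$; this is what produces the leading factor $2^{k-1}$.

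Finally, on the arc $I$ the surviving factor $\widehat{\chi_A}$ must be controlled from below, and here the Nazarov--Tur\'an inequality of \cite{naz1} is the decisive tool: an exponential sum with $n=|A|$ terms cannot be small on $I$ relative to its global maximum, which for $\widehat{\chi_A}$ equals $\widehat{\chi_A}(0)=|A|$. Comparing the supremum over the whole circle with that over an arc of length $\delta\asymp k^{-1/2}$ contributes a Nazarov ratio of order $(C/\delta)^{n-1}\asymp k^{(|A|-1)/2}$, whose reciprocal, once the explicit constant of \cite{naz1} is inserted and $\delta$ is tuned (the optimal choice carrying the $\sqrt{|A|-1}$), yields precisely the factors $(2k)^{-\frac{|A|-1}{2}}$ and $\bigl(\tfrac{\sqrt{|A|-1}}{7\pi e^{3/2}}\bigr)^{|A|-1}$. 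Converting the resulting $L^{p'}$ arc estimate back through H\"older on $I$ then supplies factors of the type $(k+1)^{-1/p'}$ and $|A|^{1/p}$, while the hypothesis that $k$ be large guarantees $\delta<\pi$ and legitimizes $|e^{i\theta}-1|^k\gtrsim 2^k$ on $I$. The identical argument applied to $\widehat{\chi_A'}$, whose global size is measured by $\|\chi_A'\|_{l^2}=(2|\partial_l A|)^{1/2}$ and whose term count is $2|\partial_l A|$, gives \eqref{nb2}. I expect the principal obstacle to be the careful balancing of $\delta$ against the two competing demands---extracting the full $2^k$ from $(e^{i\theta}-1)^k$ while keeping the Nazarov ratio as small as possible---together with the faithful propagation of the explicit constants of \cite{naz1} all the way through to the stated inequalities.
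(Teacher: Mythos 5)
Your outline matches the paper's proof in its main architecture: pass to the Fourier side, factor $\widehat{\chi_A^{(k)}}=(e^{2\pi i x}-1)^k\widehat{\chi_A}$ (and $\widehat{\chi_A^{(k+1)}}=(e^{2\pi i x}-1)^k\widehat{\chi_A'}$ for \eqref{nb2}), restrict to an arc of length $\asymp k^{-1/2}$ around the point where $|e^{2\pi i x}-1|=2$, apply the Nazarov--Tur\'an inequality with term counts $|A|$ and $2|\partial_l A|$, and optimize the arc length; this is exactly how the factors $2^{-\frac{|A|-1}{2}\log_2 2k}$ and $\bigl(\sqrt{|A|-1}/(7\pi e^{3/2})\bigr)^{|A|-1}$ arise in the paper as well.

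The genuine gap is the step converting the arc information into an $l^p$ bound. Nazarov--Tur\'an hands you a lower bound on $\sup_I|\widehat{\chi_A^{(k)}}|$, and your plan to turn this into an $L^{p'}(I)$ lower bound ``through H\"older on $I$'' cannot work: H\"older bounds $\|g\|_{L^{p'}(I)}$ from \emph{above} by $|I|^{1/p'}\|g\|_{L^{\infty}(I)}$, and no inequality in the reverse direction exists, since a function can be large at one point of $I$ while carrying negligible $L^{p'}$ mass there. (Even if some conversion were available, integrating over an arc with $|I|\asymp k^{-1/2}$ would produce $k^{-1/(2p')}$, not the stated $(k+1)^{-1/p'}|A|^{1/p}$.) Likewise, for $p>2$ Hausdorff--Young is unavailable and ``Parseval plus duality'' is not an argument. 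The paper sidesteps both issues at once: it runs the Fourier step only at $p=1$, where Hausdorff--Young is the trivial bound $\|\chi_A^{(k)}\|_{l^1(\Z)}\geq\|\widehat{\chi_A^{(k)}}\|_{L^{\infty}(\Tt)}\geq \sup_I|\widehat{\chi_A^{(k)}}|$, and then obtains every $1\leq p\leq\infty$ from the physical-side H\"older inequality \eqref{hol}, which rests on the fact that $\chi_A^{(k)}$ is supported in a set of at most $(k+1)|A|$ integers. That support bound is the true origin of the factors in the statement, via $|A|\cdot\big((k+1)|A|\big)^{-1/p'}=(k+1)^{-1/p'}|A|^{1/p}$. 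If you insist on your per-$p$ Fourier route, the repair is to upgrade Nazarov--Tur\'an to a distributional statement: applying it to every measurable $E\subset I$ with $|E|=|I|/2$ shows that $|\widehat{\chi_A^{(k)}}|$ exceeds $\big(|I|/(28e)\big)^{|A|-1}$ times its global supremum on at least half of $I$, and this does yield a genuine $L^{p'}(I)$ lower bound.

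A smaller discrepancy concerns \eqref{nb2}: you normalize $\widehat{\chi_A'}$ by $\|\chi_A'\|_{l^2(\Z)}=(2|\partial_l A|)^{1/2}$, whereas the paper feeds the quantity $2|\partial_l A|$ into Nazarov--Tur\'an as the global bound; with only the square root your route recovers a strictly smaller power of $2|\partial_l A|$ than the $(2|\partial_l A|)^{1/p}$ appearing in the statement.
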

 For the definition of the left boundary see subsection 3.1.  Applying a bound of Borwein and Erd\'{e}lyi \cite{be} instead of the Nazarov-Tur\'{a}n bound we get 

\begin{theorem}
	Let $A\subset \Z$ be a fixed finite and nonempty set,  and  $1\leq p\leq \infty$ be a fixed number. Then  for $k$ large enough depending on $|A|$
	\begin{equation}\label{be1}
		\|\chi_A^{(k)}\|_{l^p(\Z)}  \geq 2^{k-1-3(\log_2e)\big(\frac{c\pi}{4}\big)^{\frac{2}{3}}k^{\frac{1}{3}}}\big[(k+1)|A|\big]^{-\frac{1}{p'}},
	\end{equation}
where $c$ is an absolute constant coming from the Borwein-Erd\'{e}lyi bound.
\end{theorem}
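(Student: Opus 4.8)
The plan is to mirror the proof of Theorem 3, replacing the Nazarov--Tur\'an inequality by the Remez-type inequality of Borwein and Erd\'elyi \cite{be}, whose gain on a subarc is \emph{independent} of the number of terms of the underlying exponential sum; this independence is exactly what erases every $|A|$ from the exponent.

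First I would pass to the Fourier side. With $\widehat{f}(\xi)=\sum_{n}f(n)e^{-2\pi i n\xi}$ on $\Tt=[0,1]$, the forward difference corresponds to multiplication by $e^{2\pi i\xi}-1$, so $\widehat{\chi_A^{(k)}}(\xi)=(e^{2\pi i\xi}-1)^{k}\,\widehat{\chi_A}(\xi)$ with $\widehat{\chi_A}(\xi)=\sum_{a\in A}e^{-2\pi i a\xi}$. Since $\chi_A^{(k)}$ is supported on $\bigcup_{j=0}^{k}(A-j)$, a set of at most $(k+1)|A|$ integers, H\"older's inequality gives $\|\chi_A^{(k)}\|_{l^{1}(\Z)}\le[(k+1)|A|]^{1/p'}\|\chi_A^{(k)}\|_{l^{p}(\Z)}$, while $\sup_{\xi}|\widehat{\chi_A^{(k)}}(\xi)|\le\|\chi_A^{(k)}\|_{l^{1}(\Z)}$. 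Combining these two facts, it suffices to establish the pointwise Fourier lower bound $\sup_{\xi}|\widehat{\chi_A^{(k)}}(\xi)|\ge 2^{\,k-1-3(\log_2e)(c\pi/4)^{2/3}k^{1/3}}$, after which the factor $[(k+1)|A|]^{-1/p'}$ appears automatically.

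Next I would isolate the large factor $|e^{2\pi i\xi}-1|^{k}=(2|\sin\pi\xi|)^{k}$, whose maximum $2^{k}$ is attained at $\xi=\tfrac12$. Fixing a half-width $\delta>0$ and setting $I=[\tfrac12-\delta,\tfrac12+\delta]$, one has on $I$ the bound $(2|\sin\pi\xi|)^{k}\ge 2^{k}(\cos\pi\delta)^{k}\ge 2^{k}e^{-\kappa k\delta^{2}}$ with $\kappa$ absolute, valid once $\delta$ is small. For the remaining factor, multiplying by the unimodular $e^{2\pi iM\xi}$ with $M=\max A$ gives $|\widehat{\chi_A}(\xi)|=|P(e^{2\pi i\xi})|$, where $P(z)=\sum_{a\in A}z^{M-a}$ has coefficients in $\{0,1\}$ and $P(0)=1$. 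Applying the Borwein--Erd\'elyi subarc inequality to $P$ on the arc corresponding to $I$ produces a point $\xi^{\ast}\in I$ with $|\widehat{\chi_A}(\xi^{\ast})|=|P(e^{2\pi i\xi^{\ast}})|\ge e^{-c'/\delta}$, where $c'$ is absolute and, crucially, does not depend on $\deg P$ or on $|A|$. Evaluating at $\xi^{\ast}$ then yields $\sup_{\xi}|\widehat{\chi_A^{(k)}}(\xi)|\ge 2^{k}e^{-\kappa k\delta^{2}}e^{-c'/\delta}$.

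Finally I would optimize over $\delta$: minimizing $\kappa k\delta^{2}+c'/\delta$ forces $\delta\asymp k^{-1/3}$ and a total exponent $\asymp k^{1/3}$, and propagating the exact constants from the Borwein--Erd\'elyi estimate and from the $(2\sin\pi\xi)^{k}$ bound through this balance produces the stated coefficient $3(\log_2e)(c\pi/4)^{2/3}$, the largeness of $k$ (depending on $|A|$) being used to guarantee that the optimal $\delta\asymp k^{-1/3}$ is an admissible small arc-width and that the subleading corrections are absorbed into the $-1$ in the exponent. I expect the main obstacle to lie precisely in this last step: matching the arc-length normalization in \cite{be} to the interval $I$ and tracking how the absolute constant $c$ of the Borwein--Erd\'elyi bound propagates through the optimization, since both the $k^{1/3}$ rate and its leading constant are entirely dictated by the interplay between the quadratic loss $k\delta^{2}$ coming from the sine factor and the $1/\delta$ loss coming from the Remez-type estimate.
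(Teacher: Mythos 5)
Your proposal is correct and is essentially the paper's own argument: the same reduction to $\sup_{\xi}|\widehat{\chi_A^{(k)}}(\xi)|$ via $\|f\|_{l^1(\Z)}\geq\|\widehat{f}\|_{L^\infty(\Tt)}$ and the support-counting H\"older inequality, the same lower bound $2^k(1-\pi^2\delta^2)^k$ for the sine factor on an arc centered at $1/2$, the same degree-free (hence $|A|$-free) Borwein--Erd\'elyi input on that arc, and the same optimization $\delta\asymp k^{-1/3}$ producing the loss $3(\log_2 e)(c\pi/4)^{2/3}k^{1/3}$, with the coefficient $3$ splitting as one share from the sine loss plus two shares from the Remez loss at the optimum. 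The paper merely executes the optimization exactly --- locating the root of $\phi(r)=4k\pi^2 r^3+c\pi^2 r^2-c$ near $(c/4k\pi^2)^{1/3}$ rather than balancing the two exponents --- and for your constants to match the stated ones literally you should take $\kappa=\pi^2$ (valid, since $\cos x\geq e^{-x^2}$ near $0$, matching the paper's $(1-\pi^2 r^2)$ bound), while any sharper admissible $\kappa$ only strengthens the inequality and still implies \eqref{be1}.
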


By the observation of Borwein, Erd\'{e}lyi, K\'{o}s \cite{bek} on zeros of polynomials  there is a least nonnegative integer $0\leq a\leq \log |A|$ depending on the set $A$ for which $\widehat{\chi_A}^{(a)}(1/2)\neq 0.$ We will explain this in detail in Section 4. Utilizing this fact we get

\begin{theorem}
	Let $A\subset \Z$ be a fixed finite and nonempty set,  and  $1\leq p\leq \infty$ be a fixed number.  Then for       $k$ large enough depending on $|A|$,
	\begin{equation}\label{bek1}
		\|\chi_A^{(k)}\|_{l^p(\Z)}  \geq 2^{k-\frac{a}{2}\log_2 2k+a-2}\Big[\frac{a}{e}\Big]^{\frac{a}{2}}\frac{1}{a!}\big[(k+1)|A|\big]^{-\frac{1}{p'}}.
	\end{equation}
\end{theorem}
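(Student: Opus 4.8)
The plan is to reduce everything to a single pointwise lower bound on the Fourier side near $\xi=1/2$ and then spread it across all exponents $p$ by Hölder's inequality. First I would pass to the Fourier transform on $\mathbb{T}$, writing $\widehat{\chi_A^{(k)}}(\xi) = (e^{2\pi i\xi}-1)^k\,\widehat{\chi_A}(\xi)$; since $|e^{2\pi i\xi}-1| = 2|\sin\pi\xi|$ attains its maximum $2$ at $\xi=1/2$, the factor $(e^{2\pi i\xi}-1)^k$ supplies the leading $2^k$. The elementary inequality $\|g\|_{l^1(\Z)}\geq\|\widehat{g}\|_{L^\infty(\mathbb{T})}$ then reduces the $l^1$ estimate to producing a large value of $|\widehat{\chi_A^{(k)}}|$ at a single well-chosen frequency.

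Next I would exploit the order of vanishing $a$ of $\widehat{\chi_A}$ at $1/2$ furnished by the Borwein--Erd\'{e}lyi--K\'{o}s observation. Writing $\xi=\tfrac12+t$ one has $|e^{2\pi i\xi}-1| = 2|\cos\pi t|$, while a Taylor expansion gives $\widehat{\chi_A}(\tfrac12+t) = c_a t^a + O(t^{a+1})$ with $c_a=\widehat{\chi_A}^{(a)}(1/2)/a!$. The crucial arithmetic input is that $\widehat{\chi_A}^{(a)}(1/2) = (-2\pi i)^a\sum_{n\in A}(-1)^n n^a$ is $(2\pi)^a$ times a nonzero integer (nonzero precisely by the minimality of $a$), so $|c_a|\geq(2\pi)^a/a!$. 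For $k$ large the decisive frequencies sit at scale $t\sim k^{-1/2}$, where the Taylor remainder is negligible, so it suffices to maximize $(2\cos\pi t)^k\,|c_a|\,|t|^a$ over small $t$.

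I would then carry out this one–variable optimization: using $\cos\pi t\approx e^{-\pi^2 t^2/2}$, the maximum is attained near $t_0=\sqrt{a}/(\pi\sqrt{k})$, where $(\cos\pi t_0)^k\approx e^{-a/2}$ and $|t_0|^a=(a/(\pi^2 k))^{a/2}$. Collecting factors yields $\|\widehat{\chi_A^{(k)}}\|_{L^\infty}\gtrsim 2^k\,(2\pi)^a(a!)^{-1}(a/e)^{a/2}(\pi^2 k)^{-a/2}$, and after simplifying the constants this is exactly the claimed bound $2^{k-\frac a2\log_2 2k+a-2}[a/e]^{a/2}(a!)^{-1}$ for $\|\chi_A^{(k)}\|_{l^1}$. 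The lossy constants $2^{a-2}$ and $(2k)^{-a/2}$, rather than $2^a$ and $(\pi^2 k)^{-a/2}$, are precisely the slack that absorbs the Gaussian approximation and the Taylor tail, and this is where the hypothesis ``$k$ large enough'' is consumed.

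Finally, to reach general $p$ I would invoke Hölder's inequality in the form $\|\chi_A^{(k)}\|_{l^1}\leq M^{1/p'}\|\chi_A^{(k)}\|_{l^p}$, where $M$ is the size of the support of $\chi_A^{(k)}$. Since $\chi_A^{(k)}$ is supported in $[\min A-k,\max A]$, we have $M\leq\mathrm{diam}(A)+k+1\leq(k+1)|A|$ once $k$ is large (this is the only place the diameter of $A$, and hence $A$ itself, enters ``$k$ large enough''). Rearranging gives the factor $[(k+1)|A|]^{-1/p'}$ uniformly in $1\leq p\leq\infty$ and completes the proof. The main obstacle is the optimization step: one must make the Gaussian approximation to $(\cos\pi t)^k$ and the control of the Taylor tail of $\widehat{\chi_A}$ quantitative enough to land on the stated clean constants, uniformly as $a$ ranges up to $\log|A|$.
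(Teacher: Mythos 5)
Your proposal is correct and takes essentially the same approach as the paper's proof: the $\|g\|_{l^1(\Z)}\geq\|\widehat{g}\|_{L^\infty(\Tt)}$ reduction, the Borwein--Erd\'{e}lyi--K\'{o}s vanishing order $a\leq\log_2|A|$ combined with integrality to get $|\widehat{\chi_A}^{(a)}(1/2)|\geq(2\pi)^a$, a one-variable maximization at scale $t\approx\sqrt{a}/(\pi\sqrt{k})$ with ``$k$ large'' consumed exactly where you say (so that the optimal point lies where the degree-$a$ Taylor term dominates), and H\"older with support size at most $(k+1)|A|$ to reach general $p$. Your only deviations---evaluating at a single frequency via the cosine/Gaussian approximation rather than the paper's exact maximization of $(1-\pi^2r^2)^k r^a$ over the interval $(\tfrac12-r,\tfrac12+r)$, and bounding the support by an interval of length $\mathrm{diam}(A)+k+1$ instead of by $\bigcup_{j=0}^{k}(A-j)$---are cosmetic and, if anything, leave slightly more slack against the stated constants than the paper's own computation.
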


Proofs of these three theorems, as well as heuristic investigations on the behavior of $\|\chi_A^{(k)}\|_p $ as $k\rightarrow \infty$ will be given in Section 4. We also obtain the following  result that combines  the Fourier approach with a  sparsity assumption for the sets $A.$

\begin{theorem}
	Let a finite nonempty  set $A$ satisfy the sparsity condition
	\begin{equation}\label{spc}
		\sum_{\substack{m,n\in A \\ m<n}}\frac{1}{n-m}\leq \frac{\pi}{8}|A|.
	\end{equation}
	Then for any $k\in \N$ and $p\geq 2$,
	\begin{equation*}
		\begin{aligned}
			\big\|\chi_A^{(k)}\big \|_{l^{p}(\Z)}\geq (k+1)^{\frac{1}{p}-\frac{1}{2}}2^{\frac{k}{2}-1}\big\|\chi_A\big \|_{l^{p}(\Z)},
		\end{aligned}
	\end{equation*}
and for $1\leq p< 2$,
	\begin{equation*}
	\begin{aligned}
		\big\|\chi_A^{(k)}\big \|_{l^{p}(\Z)}\geq 	 2^{k-\frac{k+2}{p}}	\big\|\chi_A\big \|_{l^{p}(\Z)}.
	\end{aligned}
\end{equation*}

\end{theorem}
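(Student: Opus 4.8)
The plan is to reduce everything to the case $p=2$, which I would handle via Plancherel's theorem together with the sparsity hypothesis, and then recover the full range of $p$ by elementary relations between $l^p$ norms.

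For the $l^2$ estimate I would pass to the Fourier side. Writing $\widehat{\chi_A}(\xi)=\sum_{n\in A}e^{-2\pi in\xi}$ and using that forward differencing acts as multiplication by $e^{2\pi i\xi}-1$, the binomial theorem gives $\widehat{\chi_A^{(k)}}(\xi)=(e^{2\pi i\xi}-1)^k\widehat{\chi_A}(\xi)$, hence $|\widehat{\chi_A^{(k)}}(\xi)|^2=(2|\sin\pi\xi|)^{2k}\,|\widehat{\chi_A}(\xi)|^2$. By Plancherel, $\|\chi_A^{(k)}\|_{l^2(\Z)}^2=\int_0^1(2|\sin\pi\xi|)^{2k}|\widehat{\chi_A}(\xi)|^2\,d\xi$. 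Since $(2|\sin\pi\xi|)^{2k}\geq 2^k$ on $[1/4,3/4]$ (there $|\sin\pi\xi|\geq 1/\sqrt2$), the problem reduces to the mass bound $\int_{1/4}^{3/4}|\widehat{\chi_A}(\xi)|^2\,d\xi\geq\tfrac14|A|$: combined with $2^k\cdot\tfrac14|A|=2^{k-2}|A|$ and $\|\chi_A\|_{l^2(\Z)}^2=|A|$, this is exactly the squared form of the claimed bound.

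The mass bound is where sparsity enters, and I expect it to be the crux. Using $\int_0^1|\widehat{\chi_A}|^2=|A|$ and periodicity, it is equivalent to $\int_{-1/4}^{1/4}|\widehat{\chi_A}(\xi)|^2\,d\xi\leq\tfrac34|A|$. Expanding $|\widehat{\chi_A}(\xi)|^2=\sum_{m,n\in A}e^{2\pi i(n-m)\xi}$ and integrating term by term, the diagonal contributes $\tfrac12|A|$, while each off-diagonal pair $m\neq n$ contributes $\frac{\sin(\pi(n-m)/2)}{\pi(n-m)}$. Bounding $|\sin|\leq 1$ and pairing $(m,n)$ with $(n,m)$ shows the off-diagonal total is at most $\frac{2}{\pi}\sum_{m<n}\frac{1}{n-m}$, which by the hypothesis $\sum_{m<n}\frac{1}{n-m}\leq\frac{\pi}{8}|A|$ is at most $\tfrac14|A|$. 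Thus $\int_{-1/4}^{1/4}|\widehat{\chi_A}|^2\leq(\tfrac12+\tfrac14)|A|=\tfrac34|A|$, settling $p=2$. The delicate points are that $\frac{\pi}{8}$ is tuned so that $\frac{2}{\pi}\cdot\frac{\pi}{8}=\frac14$, and that the localization interval $[1/4,3/4]$ must be matched to the threshold $(2\sin\pi\xi)^{2k}\geq 2^k$ for the constants to close.

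For $p\neq 2$ I would use only the $p=2$ estimate plus the fact that $\chi_A^{(k)}$ is supported in $\bigcup_{i=0}^{k}(A-i)$, a set of cardinality $N\leq(k+1)|A|$. For $p\geq 2$, Hölder gives $\|f\|_{l^2}\leq N^{1/2-1/p}\|f\|_{l^p}$ for any $f$ supported on a set of size $N$, so $\|\chi_A^{(k)}\|_{l^p}\geq N^{1/p-1/2}\|\chi_A^{(k)}\|_{l^2}\geq((k+1)|A|)^{1/p-1/2}2^{k/2-1}|A|^{1/2}=(k+1)^{1/p-1/2}2^{k/2-1}\|\chi_A\|_{l^p}$, where I used $1/p-1/2\leq 0$ and $\|\chi_A\|_{l^p}=|A|^{1/p}$. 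For $1\leq p<2$, I would instead interpolate $\|f\|_{l^p}\geq\|f\|_{l^2}^{2/p}\|f\|_{l^\infty}^{1-2/p}$, combined with the trivial bound $\|\chi_A^{(k)}\|_{l^\infty}\leq\sum_{j}\binom{k}{j}=2^k$ (and $1-2/p\leq 0$); substituting $\|\chi_A^{(k)}\|_{l^2}\geq 2^{k/2-1}|A|^{1/2}$ produces $\|\chi_A^{(k)}\|_{l^p}\geq 2^{k-(k+2)/p}|A|^{1/p}$. These final manipulations are routine, so essentially all of the content lies in the sparsity estimate above.
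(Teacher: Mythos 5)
Your proposal is correct and follows essentially the same route as the paper's proof: Plancherel combined with the pointwise bound $|e^{2\pi i\xi}-1|\geq\sqrt{2}$ on $(\tfrac14,\tfrac34)$, the trigonometric mass estimate $\int_{1/4}^{3/4}|\widehat{\chi_A}|^2\,d\xi\geq\tfrac14|A|$ from the sparsity hypothesis, H\"older on the support of size at most $(k+1)|A|$ for $p\geq 2$, and the $l^2$--$l^\infty$ interpolation with $\|\chi_A^{(k)}\|_{l^\infty}\leq 2^k$ for $1\leq p<2$. The only cosmetic difference is that you obtain the mass estimate by bounding the integral over the complementary interval $(-\tfrac14,\tfrac14)$, whereas the paper integrates the off-diagonal cosines directly over $(\tfrac14,\tfrac34)$; the computations are equivalent.
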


The short note \cite{tem3} attracted interest of researchers to the question of higher regularity, see \cite{jw} for an elaborate counterexample showing impossibility of generalizing higher regularity to larger classes of functions.

The organization of the rest of the article is as follows. The second section contains computations and heuristics leading us to believe the possibility of the bound $C_{k,p}\geq2^{k-o_p(k)}$. The third section proves Theorem 1 and Theorem 2. The last section proves the last 4 theorems, all of which use the Fourier approach.

\section{ Calculations and heuristics via simple sets}

We start with the observation  that the support of $\chi_A^{(k)}$    lies in
\begin{equation*}
	\bigcup_{j=0}^k A-k.
\end{equation*}
Hence we can write for any $p\geq 1$
\begin{equation}\label{hol}
\|\chi_A^{(k)}\|_{l^p(\Z)}	\leq \|\chi_A^{(k)}\|_{l^1(\Z)}\leq	\|\chi_A^{(k)}\|_{l^p(\Z)}\big((k+1)|A|\big)^{\frac{1}{p'}}.
\end{equation}
Thus  once  growth rates for $p=1$ case are known, by \eqref{hol} they can be  deduced for any $p\geq 1$.  Therefore in our heuristic calculations below we will take $p=1$, and use the notation $\asymp$ to mean that the two sides are heuristically the same. We will use the approximation of binomial coefficients by normal distributions. So

\begin{equation}\label{bcn}
	{k \choose n} \asymp \frac{2^k}{\sqrt{\pi k/2}} e^{-\frac{(n-k/2)^2}{k/2}}.
\end{equation}
Thus large  binomial coefficients  are those with $|n-k/2|<k^{1/2+\varepsilon}$  for   small $\varepsilon>0.$  

For   the simplest nonempty sets $A$ that contain only one point, plainly   $\|	\chi_A^{(k)}\|_{l^1(\Z)}=2^k.$  More decay can be obtained by taking two consecutive integers. Let $A=\{0,1\}$.
\begin{equation*}
	\begin{aligned}
		\|	\chi_A^{(k)}\|_{l^p(\Z)}=\sum_{n=-1}^{k}\Big| {k  \choose n+1}-{k  \choose n}\Big|\asymp   \frac{2^k}{\sqrt{\pi k/2}} \sum_{n=-1}^{k} e^{-\frac{(n-k/2)^2}{k/2}}\Big| e^{\frac{-(n+1-k/2)^2}{k/2}+\frac{(n-k/2)^2}{k/2}} -1  \Big| ,
	\end{aligned}
\end{equation*}
where  if  $n<0$ or $n>k$ we take ${k \choose n}$
to be zero. As the significant values of $n$ are those close to $k/2$, the exponent of $e$ inside the paranthesis is small, and we can just take the first two terms of the Taylor expansion. Then we can write
\begin{equation*}
	\begin{aligned}
	e^{\frac{-(n+1-k/2)^2}{k/2}+\frac{(n-k/2)^2}{k/2}} -1=e^{\frac{-1+(k-2n)}{k/2}} -1	\asymp  \frac{-1+(k-2n)}{k/2}.
	\end{aligned} 
\end{equation*}
 The term $(k-2n)$ dominates $1$ except for  $k=2n.$ So we can write  
\begin{equation*}
	\begin{aligned}
		\sum_{n=-1}^{k}\Big| {k  \choose n+1}-{k  \choose n}\Big| \asymp \frac{2^{k+\frac{5}{2}}}{(\pi k^3)^{\frac{1}{2}}} \sum_{n=0}^{k} e^{-\frac{(n-k/2)^2}{k/2}}|k/2-n|.	
	\end{aligned}
\end{equation*}
This  can then be approximated by 
\begin{equation*}
	\begin{aligned}
		\asymp 	\frac{2^{k+\frac{7}{2}}}{(\pi k^3)^{\frac{1}{2}}}\int_0^{k/2} e^{-\frac{n^2}{k/2}}ndn\asymp	\frac{2^{k+\frac{5}{2}}}{(\pi k)^{\frac{1}{2}}}\int_0^{\infty} e^{-n^2}ndn = \frac{2^{k+\frac{3}{2}}}{(\pi k)^{\frac{1}{2}}}.
	\end{aligned}
\end{equation*}
So  cancellation leads to a factor $k^{\frac{1}{2}}$ in the denominator.

We may think of increasing  cancellation yet further by taking  $A:=\{0,1,3,4  \}.$  
\begin{equation}\label{bcn3}
	\begin{aligned}
			\|	\chi_A^{(k)}\|_{l^1(\Z)}&=\sum_{n=-4}^{k}\Big| -{k  \choose n+4}+{k  \choose n+3}+{k  \choose n+1}-{k  \choose n}\Big|
		\\     &\asymp \sum_{n=-4}^{k}\frac{2^k}{\sqrt{\pi k /2}} e^{-\frac{(n-k/2)^2}{k/2}} \Big| -e^{\frac{-4^2+4(k-2n)}{k/2}}+e^\frac{-3^2+3(k-2n)}{k/2}+e^\frac{-1^2+1(k-2n)}{k/2}-1\Big|. 
	\end{aligned}
\end{equation}
Doing a first order Taylor expansion of exponentials inside the absolute value gives
\begin{equation}\label{bcn2}
	\begin{aligned}
		&\asymp \Big| -\frac{-4^2+4(k-2n)}{k/2}+\frac{-3^2+3(k-2n)}{k/2}+\frac{-1^2+(k-2n)}{k/2}\Big|\\ &= \Big|\frac{2}{k}\big(4^2-3^2-1^2\big)+ \frac{2k-4n}{k}\big(-4+3+1\big)\Big| = \frac{12}{k}. 
	\end{aligned}
\end{equation}
Then  \eqref{bcn3} can be estimated by using \eqref{bcn}
\begin{equation*}
	\begin{aligned}
		\asymp  \frac{12}{k}2^k\frac{1 }{(\pi k/2)^{1/2}}	\sum_{n=0}^{k} e^{-\frac{(n-k/2)^2}{k/2}} \asymp
 \frac{12}{k} 2^{k}. 
	\end{aligned}
\end{equation*}
So in this way another half power of $k$ is gained.  But more importantly this calculation sheds light unto how much cancellation can be obtained in this way. Suppose we have a set    
\begin{equation*}
A:=\Big\{ a_j\in 2\Z,  \ b_j\in 2\Z+1, \ 1\leq j\leq m \ \Big| \   \sum_{j=1}^m a_j^d=	\sum_{j=1}^m b_j^d,    \quad     1\leq d\leq D     \Big\} 
\end{equation*}
of $2m$  elements, all which are  much smaller than $k.$  This then would yield a sum 
\begin{equation*}
	\begin{aligned}
		\sum_{n=0}^{k}\Big| \sum_{j=1}^m{k  \choose n+a_j}-{k  \choose n+b_j}\Big|\asymp \sum_{n=0}^{k}\frac{2^k}{\sqrt{\pi k /2}} e^{-\frac{(n-k/2)^2}{k/2}} \Big|\sum_{j=1}^m e^{\frac{-a_j^2+a_j(k-2n)}{k/2}}-\sum_{j=1}^me^\frac{-b_j^2+b_j(k-2n)}{k/2}\Big|.
	\end{aligned}
\end{equation*}
We may suppose $|k-n/2|\leq k^{1/2+\varepsilon}$, as these are the larger terms. Then the first $\lfloor D/2 \rfloor +1$ terms from the Taylor expansions of exponentials inside the absolute value     will all vanish.  From the higher order terms we  will get  $k^{\lfloor D/2 \rfloor}$ in the denominator. But then the question is how large  $D$ can be for a given $m$, or how large $m$ has to be for a given  $D$? This question is similar to  the well known  and  still unresolved Prouhet-Tarry-Escott (PTE) problem of number theory, but has the very important difference that $a_j,b_j$ are chosen from different parities. This introduces powers of $2$ to one side of the equations of the problem, and  drives the number of variables to be much larger  than the PTE problem. For the PTE problem for a given $D$ it is known that $D+1\leq m\leq D(D+1)/2,$ while for this problem we need $2^D\leq m,$ which follows from an observation of Borwein, Erd\'{e}lyi and K\'{o}s \cite{bek}.  But as we must also have these variables $\lesssim \sqrt{k}$ to  apply the Taylor expansion, we must have $m\lesssim \sqrt{k} $, implying $D\lesssim \log k$.   
In the end  we get to bounds of the form 
\begin{equation*}
	\sum_{n=0}^{k}\Big| \sum_{j=1}^m{k  \choose n+a_j}-{k  \choose n+b_j}\Big|	\gtrsim 2^{k}k^{-\lfloor D/2 \rfloor}\geq  2^{k}k^{-c\log k}\approx 2^{k-c\log^2k}.
\end{equation*}  
Since $A$ in this case contains less than $k$ terms, contribution from $\|\chi_A'\|_p$ is at most $(2k)^{1/p}.$  Therefore it seems logical to ask whether 
\begin{equation}\label{qu}
	\|\chi_A^{(k)}\|_{l^p(\Z)}\geq C_p 2^{k-c_p\log^2k}\|\chi_A'\|_{l^p(\Z)}
\end{equation}
holds for arbitrary  $A,k.$
Our Theorem 3 and Theorem 5 obtain such growth, but only after fixing the set $A$ and letting $k\rightarrow \infty.$

We would like to highlight connections of this problem to  computer science. Observe that for a fixed $n$ we are looking at values of 
\begin{equation*}\label{}
	\begin{aligned}
	\chi_A^{(k)}(n)=\sum_{j=0}^k(-1)^{j}{k  \choose j} \chi_A(n+k-j).
	\end{aligned}
\end{equation*}
Since $\chi_A$ is either zero or one, we are indeed choosing elements from the multiset $\{(-1)^{j}{k  \choose j}:0\leq j\leq k\}$, adding them up and looking at the value. This is a variant of the  well-known subset sum problem of computational complexity theory. Variants of the subset sum problem similar to ours emerge  naturally in coding theory e.g. in the article \cite{var} of Varshamov, the problem of counting the subsets of $\{1,2,\ldots,j\}$ for which the sum of elements is $k \ (\text{mod} \ n)$ emerges. This problem has been studied in detail in \cite{wk,sy}, and exact closed formulas  were found.

Our problem looks like a discrete version of  various inequalities widely used in  PDE literature that bounds the norms of a function with the norms of its derivatives. The Sobolev imbedding theorem  and the Poincare inequality are the two most relevant results for us.  The Sobolev imbedding theorem states that the Sobolev space $W^{m,q}(\R^n)$ embeds continuously into $L^r(\R^n)$ if $p^{-1}-q^{-1}=m/n$. We note that for the case $p=q$, which is what we are interested in, this theorem gives nothing nontrivial. The Poincare inequality states that for $\Omega$ a domain bounded in at least one direction, and $1\leq p <\infty$ we have
\begin{equation*}
	\|f\|_{L^p(\Omega)}\leq C_{p,\Omega} \|\nabla f\|_{L^p(\Omega)}
\end{equation*}
for every function $f\in W^{1,p}$ with zero trace. So in this inequality $p=q$, and indeed the main idea of the inequality is that bounded support in one direction forces $f$ to change, and therefore have substantially large derivative in that direction.  This is very similar to our restriction to characteristic functions of finite sets. But as can be seen from very simple examples the constant in the Poincare inequality cannot be made independent of the domain, whereas we seek uniform estimates that will be independent of the set $A$ completely.


\section{Uniform bounds}

In this section we  present proofs of Theorem 1.  In the first subsection we  present an observation on binomial coefficients that  allows us to obtain Theorem 1 up to $k\leq 7. $ Then in subsequent subsections we will present two proofs of Theorem 1, with the second one getting a  much better bound $C_{k,p}$ than the first.

\subsection{Theorem 1 for $k\leq 7$ via an observation on binomial coefficients}

For   a finite set $A$, the   $l^p$ norms of first  two derivatives of $\chi_A$ can be computed explicitly by developing a boundary  theory for subsets of integers. This theory has been presented and utilized in \cite{tem3} as well.
We will call a subset of integers $k$-separated if any two consecutive elements of the set have between them $k$ elements that do not belong to the set. We define the left boundary   $\partial_lA$ of $A$ to be the  set of points $n$ such that $n-1\notin A$.  Similarly, $\partial_rA$ is the set of points $n\in A$ such that $n+1\notin A$. The boundary  $\partial A$ is the union of the left and the right boundaries of $A.$ 
 We observe that $\partial_lA, \partial_rA $ are $1$-separated sets, and they have the same cardinality for finite $A$.  
And  if a set $A$ is $k\geq 1$ separated, then $\partial_lA= \partial_rA=A.$ 

We observe that $\chi_A(n+1)-\chi_A(n)=1$  if and only if $n+1\in A, \ n\notin A$; that is $n \in \partial_l A-1.$ And we also observe that $\chi_A(n+1)-\chi_A(n)=-1$  if and only if $n\in A, \ n+1\notin A$; that is, $n \in \partial_r A.$  Furthermore  $ \partial_rA\cap (\partial_lA-1)=\emptyset$, for  $n\in \partial_rA$ means $n+1 \notin A.$ Thus we have
\begin{equation*}
	\chi_A'(n)=	\chi_A(n+1)-\chi_A(n)=\chi_{\partial_lA-1}(n)-\chi_{\partial_rA}(n),
\end{equation*}
and
\begin{equation*}
|\chi_A'(n)|=|\chi_{\partial_lA-1}(n)-\chi_{\partial_rA}(n)|=\chi_{\partial_rA}(n)+\chi_{\partial_lA-1}(n).
\end{equation*}
Therefore for $1\leq p <\infty$,
\begin{equation*}
\begin{aligned}
	\|\chi_A'\|_{l^p(\Z)}=\Big[\sum_{n\in \Z}(\chi_{\partial_rA}(n)+\chi_{\partial_lA-1}(n))^p\Big]^{1/p}=\Big[\sum_{n\in \Z}\chi_{\partial_rA}(n)+\chi_{\partial_lA-1}(n)\Big]^{1/p}=(2|\partial_rA|)^{1/p}.
		\end{aligned}
\end{equation*}
Plainly this equality also holds for $p=\infty$.

We can also calculate the second derivative using these properties.
\begin{equation*}
	\begin{aligned}
	\chi_A''(n)=	\chi_A'(n+1)-\chi_A'(n)&=\chi_{\partial_lA-2}(n)-\chi_{\partial_rA-1}(n)-[\chi_{\partial_lA-1}(n)-\chi_{\partial_rA}(n)]\\ &= \chi_{\partial_lA-2}(n)+\chi_{\partial_rA}(n)-\chi_{\partial_rA-1}(n)-\chi_{\partial_lA-1}(n)
	\end{aligned}
\end{equation*}
We claim that $(\partial_lA-2)\cup \partial_rA$ does not intersect  $(\partial_rA-1)\cup (\partial_lA-1)$.  To see this, let $n\in \partial_rA.$ We know that it cannot be in $\partial_lA-1$. It cannot belong to   $ \partial_rA-1$ as well, for $\partial_rA$ is $1$ separated. For the same reasons $n\in \partial_lA-2$ cannot be in  
$(\partial_rA-1)\cup (\partial_lA-1)$, yielding the claim. Therefore
\begin{equation*}
	\begin{aligned}
		|\chi_A''(n)|&=	 |\chi_{\partial_lA-2}(n)+\chi_{\partial_rA}(n)-\chi_{\partial_rA-1}(n)-\chi_{\partial_lA-1}(n)| \\ &=\chi_{\partial_lA-2}(n)+\chi_{\partial_rA}(n)+\chi_{\partial_rA-1}(n)+\chi_{\partial_lA-1}(n).
	\end{aligned}
\end{equation*}
Therefore for $1\leq p<\infty,$
\begin{equation*}
	\begin{aligned}
		\|\chi_A''\|_{l^p(\Z)} &= \Big[\sum_{n\in \Z}\big(\chi_{\partial_lA-2}(n)+\chi_{\partial_rA}(n)\big)^p+\big(\chi_{\partial_rA-1}(n)+\chi_{\partial_lA-1}(n)\big)^{p}  \Big]^{1/p}
		\\ &\geq  \Big[\sum_{n\in \Z}\chi_{\partial_lA-2}(n)+\chi_{\partial_rA}(n)+\chi_{\partial_rA-1}(n)+\chi_{\partial_lA-1}(n) \Big]^{1/p}\\ &= (4|\partial_rA|)^{1/p}.
	\end{aligned}
\end{equation*}
But also 
\begin{equation}\label{2nd}
	\|\chi_A''\|_{l^p(\Z)}\leq 2\|	\chi_A'\|_{l^p(\Z)}=2^{1+{1}/{p}}|\partial_rA|^{1/p}.
\end{equation}
Again we observe that these bounds also hold for $p=\infty.$ We also observe that both these bounds are sharp. For  the set $A=\{0,1\}$  we have  $	\|\chi_A''\|_{l^p(\Z)}=(4|\partial_rA|)^{1/p}$  for any $1\leq p\leq \infty$. For the sets $A_n=\{2,4,6,\ldots,2n\}$ we have $|\partial_rA|=n$, and  $	\|\chi_A''\|_{l^p(\Z)}=[2^p(2|\partial_rA|-1)+2]^{1/p}$, showing that the constant $2^{1+1/p}$ cannot be replaced by any smaller constant.

   As higher derivatives concern farther apart points, this nonintersection argument begins to lose its force.  But it still  allows us to obtain lower bounds for $l^p$ norms of derivatives for up to a certain order.

For  the third derivative
 \begin{equation}\label{lb3}
 \chi_A^{(3)}(n):=\chi_A(n+3)-3\chi_A(n+2)+3\chi_A(n+1)-\chi_A(n),
 \end{equation}
if we consider $n\in \partial_rA -1$, then $n+1\in A$ but $n+2\notin  A$.  For these $n$, therefore,   \eqref{lb3}
 is not less than $2$. Similarly if $n\in \partial_lA -2$, then $n+2\in A$ but $n+1\notin A$.  For these $n$, then,  \eqref{lb3} is not more than -2. As $\partial_rA\cap (\partial_lA -1)=\emptyset$,
 \begin{equation}\label{}
 \|	\chi_A^{(3)}(n)\|_{l^p(\Z)}^p\geq \sum_{n\in  (\partial_rA -1)\cup(\partial_lA -2) }|\chi_A(n+3)-3\chi_A(n+2)+3\chi_A(n+1)-\chi_A(n)|^p=2^{1+p}|\partial_rA|.
 \end{equation}
Thus,  for any $1\leq p\leq \infty$ we obtain
$
	\|\chi_A^{(3)}\|_{l^p(\Z)}\geq 2^{1+{1}/{p}}|\partial_rA|^{1/p}.
$

For $k=4,$ the same argument  yields
$|\chi_A^{(4)}(n)|\geq 2 $
if we pick $n\in \partial_rA-1,\partial_lA-2 .$
Then we obtain
$
	\|\chi_A^{(4)}\|_{l^p(\Z)}\geq 2^{1+{1}/{p}}|\partial_rA|^{1/p}.
$

For  $k=5$, picking  $n\in \partial_rA-2,\partial_lA-3$  yields
$
	|	\chi_A^{(5)}(n)|\geq 4,
$
implying
$
	\|\chi_A^{(5)}\|_{p}\geq 2^{2+{1}/{p}}|\partial_rA|^{1/p}.
$

For  $k=6$, we have
$	|	\chi_A^{(6)}(n)|\geq 3$
 if we pick $n\in \partial_rA-2,\partial_lA-3.$ 
Thus
$
	\|\chi_A^{(6)}\|_{p}\geq 3(2|\partial_rA|)^{1/p}.
$

For  $k=7$, we have
$
		|\chi_A^{(7)}(n)|\geq 6
		$
if we pick $n\in \partial_rA-3,\partial_lA-4.$ 
Hence
$
	\|\chi_A^{(7)}\|_{p}\geq 6(2|\partial_rA|)^{1/p}.
$

The method is stuck at 8th derivative. Clearly we need further arguments to proceed.

\subsection{Theorem 1 in  $p=\infty$ case} In this subsection we prove the following special case of Theorem 1.  The proof is  similar to the two proofs of  Theorem 1 that we will present, but is simpler. The trade-off of simplicity is that it works only for $l^{\infty}$.

\begin{proposition}
	Let $A$ be a finite nonempty subset of integers. For any $k\geq 1$,
	\begin{equation}
	\|\chi_A^{(k)}\|_{l^\infty(\Z)}\geq 1=\|\chi_A'\|_{l^\infty(\Z)}.
\end{equation} 	
	\end{proposition}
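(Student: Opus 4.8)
The plan is to treat the two assertions separately. The equality $\|\chi_A'\|_{l^\infty(\Z)}=1$ is immediate: since $\chi_A$ takes only the values $0$ and $1$, the forward difference $\chi_A'(n)=\chi_A(n+1)-\chi_A(n)$ lies in $\{-1,0,1\}$, so $\|\chi_A'\|_{l^\infty(\Z)}\leq 1$; and because $A$ is finite and nonempty it has a largest element, at which $\chi_A$ drops from $1$ to $0$, forcing the value $-1$ and hence equality.

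For the lower bound $\|\chi_A^{(k)}\|_{l^\infty(\Z)}\geq 1$ I would exploit the finiteness of $A$ directly through its rightmost point. Let $b:=\max A$, which exists since $A$ is finite and nonempty. Evaluating the defining sum at $n=b$,
\begin{equation*}
	\chi_A^{(k)}(b)=\sum_{j=0}^{k}{k \choose j}(-1)^j\,\chi_A(b+k-j),
\end{equation*}
every argument $b+k-j$ with $j<k$ satisfies $b+k-j\geq b+1>b=\max A$, so $\chi_A(b+k-j)=0$ there; only the term $j=k$ survives, and it equals $(-1)^k\chi_A(b)=(-1)^k$. Thus $|\chi_A^{(k)}(b)|=1$, which already yields $\|\chi_A^{(k)}\|_{l^\infty(\Z)}\geq 1$ and completes the proof.

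An alternative route, closer in spirit to the two forthcoming proofs of Theorem 1, is to argue by contradiction. The quantity $\chi_A^{(k)}(n)$ is an integer for every $n$, being an integer combination of the values of $\chi_A\in\{0,1\}$. Hence $\|\chi_A^{(k)}\|_{l^\infty(\Z)}<1$ would force $\chi_A^{(k)}\equiv 0$. But a sequence on $\Z$ whose $k$-th forward difference vanishes identically must agree with a polynomial of degree at most $k-1$, and a polynomial that vanishes for all large $n$ (as $\chi_A$ does, having finite support) is identically zero, contradicting that $A$ is nonempty. This is the prototype of the ``$\chi_A^{(k)}$ cannot vanish on a long interval'' mechanism announced for Theorem 1.

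There is essentially no obstacle in the $l^\infty$ case: evaluation at the boundary point $b=\max A$ isolates a single surviving term cleanly, and integrality does the rest. The genuine difficulty, and the reason this argument is confined to $p=\infty$, is that for finite $p$ one must control the contributions of many lattice points near the boundary simultaneously and extract a lower bound uniform in $A$, which is precisely what the sharper arguments behind Theorem 1 are designed to accomplish.
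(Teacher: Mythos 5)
Your proposal is correct, but your primary argument is genuinely different from the paper's. The paper proves the lower bound by contradiction: if $\chi_A^{(k)}\equiv 0$, then $\chi_A^{(k-1)}$ is constant, and finiteness of the support forces that constant to be zero; iterating down gives $\chi_A\equiv 0$, contradicting $A\neq\emptyset$, and integrality of $\chi_A^{(k)}$ then yields $\|\chi_A^{(k)}\|_{l^\infty(\Z)}\geq 1$. Your main argument instead evaluates at the rightmost point $b=\max A$, where all terms with $j<k$ vanish because $b+k-j>\max A$, leaving $\chi_A^{(k)}(b)=(-1)^k$ exactly; this is a direct, constructive computation with no contradiction, and it is in fact the seed of the paper's proof of Theorem 2, which evaluates $\chi_A^{(k)}$ at $a_l-n$ for $n\approx k/3$ near the largest element $a_l$ to extract the exponential bound $\frac{1}{3}\binom{k}{n}$ (your argument is the case $n=0$). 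What each approach buys: your endpoint evaluation is shorter and pins down an explicit point of modulus one, but — as you correctly note in your closing paragraph — it produces only a single large value tied to the boundary, which is why this mechanism leads to the non-uniform Theorem 2 rather than to bounds against $\|\chi_A'\|_{l^p(\Z)}$ for finite $p$; the paper's iteration argument, by contrast, is the prototype that localizes to ``$\chi_A^{(k)}$ cannot vanish on a long interval containing a point of $\partial_r A\cup(\partial_l A-1)$'' (Propositions 2 and 3), which is exactly what the finite-$p$, uniform-in-$A$ proofs of Theorem 1 require. Your sketched alternative route (integrality plus the fact that a sequence with vanishing $k$-th difference is a polynomial of degree at most $k-1$, which cannot have finite support unless zero) is essentially the paper's own proof, stated through the discrete Taylor expansion used in the paper's second proof of Theorem 1 rather than through the step-by-step iteration of constants; both are valid.
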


\begin{proof}
	We observe that if $\chi_A^{(k)}=0$ identically, then $\chi_A^{(k-1)}$ is constant, and this constant must be zero,   for as $A$ is finite, the support of  $\chi_A^{(k-1)}$  is also finite, hence it cannot be a constant other than zero. Iterating this argument yields a contradiction as we assumed $A$ to be a finite nonempty set. Hence $\chi_A^{(k)}$ cannot be identically zero. As it is integer valued, the claim follows. 
\end{proof}

 \subsection{The First proof of Theorem 1} 
 We start with  some simple observations. Clearly if $n\in  \partial_rA$, then  $\chi_A (n),\chi'_A (n),\chi''_A (n)\neq 0.$ 
 Also if $n\in  \partial_lA-1$, then clearly $\chi'_A (n),\chi''_A (n)\neq 0.$ 
 
 \begin{proposition}
 	Let $k\geq 3$.	Consider an interval $I=\{ n,n+1,\ldots,n+2^k-1 \}$ that contains a point from  $\partial_rA\cup (\partial_lA-1)$. Then $\chi^{(k)}_A$ cannot be zero on all of this interval.
 \end{proposition}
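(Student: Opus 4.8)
The plan is to argue by contradiction. Suppose $\chi_A^{(k)}(m)=0$ for every $m\in I=\{n,\ldots,n+2^k-1\}$ and derive a contradiction from the assumed boundary point. The organizing idea is that $\chi_A^{(k)}$ is the $k$-th forward difference of $\chi_A$, and the vanishing of a $k$-th forward difference on a \emph{contiguous} block of integers is precisely the statement that $\chi_A$ agrees on a slightly larger block with a single polynomial of degree at most $k-1$. The contradiction will then be forced by the range restriction $\chi_A\in\{0,1\}$: a polynomial of such low degree cannot take only the values $0$ and $1$ at so many consecutive integers unless it is constant, whereas a boundary point witnesses a genuine change of value.

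First I would make the polynomial statement precise by telescoping, exactly as the paper already does for the first two derivatives. Using $\chi_A^{(k)}(m)=\chi_A^{(k-1)}(m+1)-\chi_A^{(k-1)}(m)$, the hypothesis $\chi_A^{(k)}\equiv 0$ on $I$ says that $\chi_A^{(k-1)}$ is constant on $\{n,\ldots,n+2^k\}$, an interval one integer longer. Since a sequence whose first difference is a polynomial of degree $\le d$ is itself a polynomial of degree $\le d+1$, descending from level $k-1$ down to level $0$ and lengthening the interval by one at each of the $k$ steps yields a polynomial $P$ of degree at most $k-1$ with $\chi_A=P$ on $J:=\{n,n+1,\ldots,n+2^k-1+k\}$, a block of $2^k+k$ consecutive integers.

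Next I would invoke the $\{0,1\}$ constraint. Since $P=\chi_A$ on $J$, the polynomial $P$ takes only the values $0$ and $1$ at the $2^k+k$ consecutive integers of $J$. A nonconstant polynomial of degree $d$ attains each fixed value at most $d$ times, so a nonconstant $P$ of degree $\le k-1$ could take values in $\{0,1\}$ at most $2(k-1)$ integers; but $2^k+k>2(k-1)$ for every $k\ge 1$, forcing $P$ to be constant on $J$. Finally I would bring in the boundary point $p\in\partial_rA\cup(\partial_lA-1)$ guaranteed to lie in $I\subseteq J$. If $p\in\partial_rA$ then $p\in A$ and $p+1\notin A$, and if $p\in\partial_lA-1$ then $p\notin A$ and $p+1\in A$; in either case $\chi_A(p)\ne\chi_A(p+1)$. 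Since $n\le p\le n+2^k-1$ we have both $p,p+1\in J$, so $\chi_A(p)=P(p)=P(p+1)=\chi_A(p+1)$, a contradiction. Hence $\chi_A^{(k)}$ cannot vanish on all of $I$.

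The conceptual content is elementary, so there is no single deep step; the main points to handle carefully are the bookkeeping of the enlarged interval $J$ and the verification that the pair $\{p,p+1\}$ lands inside the block on which constancy has been forced. It is worth noting that the argument in fact needs far less than the stated length: any interval of length at least $k-1$ already makes $J$ long enough (at least $2k-1$ points) to force constancy, so the generous choice $2^k$ leaves ample slack and is presumably convenient for the counting in the subsequent deduction of $C_{k,p}=2^{-k/p}$ rather than essential to the present proposition.
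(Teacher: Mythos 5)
Your proof is correct, but it is not the argument the paper uses for this proposition. The paper's proof stays at the level of derivatives: assuming $\chi_A^{(k)}\equiv 0$ on $I$, it notes $\chi_A^{(k-1)}$ is a constant integer $K$ on $I\cup\{n+2^k\}$; if $K\neq 0$ then $\chi_A^{(k-2)}$ grows linearly, reaching size at least $2^k$ somewhere, which contradicts the trivial bound $\|\chi_A^{(k-2)}\|_{l^\infty(\Z)}\leq 2^{k-2}$; hence $K=0$, and iterating this descent forces even $\chi_A''$ to vanish on $I$, contradicting the earlier observation that $\chi_A''\neq 0$ at points of $\partial_rA\cup(\partial_lA-1)$. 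This is precisely why the paper needs the exponential length $2^k$: linear growth over the interval must beat the $\ell^\infty$ bound $2^{k-2}$. You instead antidifferentiate all the way down to $\chi_A$ itself, obtain agreement with a polynomial of degree at most $k-1$ on a block of $2^k+k$ consecutive integers, and conclude by root counting against the $\{0,1\}$ range and the boundary point. That route is sound (each step --- the telescoping, the discrete antidifferentiation raising degree by one per step, the count $2^k+k>2(k-1)$, and the placement of $p,p+1$ in $J$ --- checks out), and your closing remark is exactly right: it only needs $|I|\gtrsim k$, not $2^k$. In effect you have rediscovered the paper's \emph{second} proposition (Subsection 3.4), where the same polynomial/discrete-Taylor argument on an interval of length $2k+1$ yields the improved constant $(2k+1)^{-1/p}$ in place of the $2^{-k/p}$ that this proposition produces; the paper's proof here trades that sharpness for an argument that uses nothing beyond the trivial $\ell^\infty$ bound on discrete derivatives, while yours buys the stronger quantitative conclusion at the cost of invoking the fundamental theorem of algebra.
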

 
 \begin{proof}
 	Assume to the contrary that 	 $\chi^{(k)}_A$ is zero on all of $I$.  Then $\chi^{(k-1)}_A$ is constant on all of $I\cup\{n+2^k\}.$ Now if  this constant is nonzero,
 	then as $\chi_A^{(k-1)}(n+j+1)-\chi_A^{(k-1)}(n+j)=\chi_A^{(k)}(n+j)=0$ we have $\chi_A^{(k-1)}(n+j)$  a constant $K$ for any $0\leq j\leq 2^k,$ and this constant is an integer. Therefore it is at least 1 in absolute value. Then, as 
 	$\chi_A^{(k-2)}(n+j+1)-\chi_A^{(k-2)}(n+j)=\chi_A^{(k-1)}(n+j)=K,$ we have $\chi_A^{(k-2)}(n+j)=jK+\chi_A^{(k-2)}(n)$ for any $0\leq j\leq 2^k+1$. So $|\chi_A^{(k-2)}(n+2^k+1)-\chi_A^{(k-2)}(n)|\geq 2^{k+1}.$ Therefore at least one of $|\chi_A^{(k-2)}(n+2^k+1)|,|\chi_A^{(k-2)}(n)|$ is at least $2^k.$ But as we know  $\|\chi^{(k-2)}_A\|_{l^\infty(\Z)}\leq 2^{k-2}$, this is a contradiction.
 	
 	So this constant is zero. Then we iterate the argument above  until we obtain that  the second derivative is zero on $I$, which is a contradiction as $I$ contains a point from   $\partial_rA\cup (\partial_lA-1)$.

 \end{proof}
 
 We now employ this proposition to prove Theorem 1.
 
 \begin{proof}[The first proof of Theorem 1]
 Let us partition 
 \begin{equation*}
 \Z=\bigcup_{n\in \Z} I_n,   \qquad  \qquad    I_n=\big\{j\in \Z:  n2^k\leq j <(n+1)2^k \big\}.
 \end{equation*}
 Out of these $I_n$,
 at least $\lceil | \partial_rA|/2^{k-1}\rceil $  intervals contain elements from $\partial_rA\cup (\partial_lA-1)$, for  otherwise, as each $I_n$ can contain at most $2^k$ elements, the total number of $2|\partial_rA|$ could not be found. On these intervals  the function $\chi_A^{(k)}$ is not identically zero. Therefore $\|\chi_A^{(k)}\|_{l^\infty(\Z)}\geq 1$, and for $1\leq p<\infty$ the following concludes the proof
 \begin{equation*}
 	\|\chi_A^{(k)}\|_{l^p(\Z)}^p=\sum_{n\in \Z}\sum_{j\in I_n}|\chi_A^{(k)}|^p\geq 2^{1-k}|\partial_rA|=2^{-k}\|\chi_A'\|_{l^p(\Z)}^p.
 \end{equation*}
 \end{proof}
 
 Plainly this is a very poor bound, but it is uniform as the constant $2^{-k/p}$ is independent of $A$.
 
 \subsection{The second proof of Theorem 1}
 We start with a proposition similar in spirit to the one in the previous subsection. For $h\in \R, \  j\in \Z^+$ we define  descending factorial powers  $h^{\underline{j}}:=h(h-1)(h-2)\cdots(h-j+1)$ if $j>0$, and $h^{\underline{j}}=1$ if $j=0$.

 \begin{proposition}
 	Let $k\geq 3$.	Let $I=\{ n+k-1,n+k,\ldots,n+3k-1 \}$ be an interval that contains a point from  $\partial_rA\cup (\partial_lA-1)$. Then $\chi^{(k)}_A$ cannot be zero on all of $I'=I-(k-1)=\{n,n+1,\ldots,n+2k\}$.
 \end{proposition}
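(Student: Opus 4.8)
The plan is to show that the hypothesis $\chi_A^{(k)}\equiv 0$ on $I'$ forces $\chi_A$ to coincide with a low-degree polynomial on a long interval, and then to contradict the presence of a genuine jump — the point of $\partial_rA\cup(\partial_lA-1)$ lying in $I$ — by a divided-difference estimate on the leading coefficient of that polynomial.

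First I would assume, for contradiction, that $\chi_A^{(k)}(n+x)=0$ for $x=0,1,\dots,2k$. Summing this relation down through the orders (i.e. applying the discrete Newton forward-difference formula) shows that on the offsets $x=0,1,\dots,3k$ one has
\[
\chi_A(n+x)=\sum_{i=0}^{k-1}\chi_A^{(i)}(n)\binom{x}{i}=\sum_{i=0}^{k-1}\frac{\chi_A^{(i)}(n)}{i!}\,x^{\underline{i}},
\]
a polynomial $P$ in $x$ of degree at most $k-1$ whose values at the integers $0,\dots,3k$ are exactly the $0/1$ values $\chi_A(n),\dots,\chi_A(n+3k)$. Since $I=I'+(k-1)$ contains a point of $\partial_rA\cup(\partial_lA-1)$, there is an offset $x_0\in[k-1,3k-1]$ at which $\chi_A$ jumps, so $P(x_0)\neq P(x_0+1)$ with both arguments in $[0,3k]$; hence $P$ is non-constant and its degree $j:=\deg P$ satisfies $1\le j\le k-1$. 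As $P$ is integer valued on consecutive integers and has degree exactly $j$, its leading coefficient equals $\chi_A^{(j)}(n)/j!$ with $\chi_A^{(j)}(n)$ a nonzero integer, so $|a_j|\ge 1/j!$.

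The hard part is to contradict this lower bound using only that $|P|\le 1$ at the $3k+1$ available integer nodes; the crude bound $|\chi_A^{(j)}|\le 2^{j}$ that sufficed in the first proof is now useless, since on an interval of length only $\sim 2k$ it no longer beats $2^{j}$ for $j$ close to $k$. Instead I would exploit that we have far more nodes ($\sim 3k$) than the degree ($\le k-1$): put $s:=\lfloor 3k/j\rfloor$ and evaluate $P$ at the spread-out nodes $y_i=is$, $i=0,\dots,j$, which all lie in $[0,3k]$. Writing $a_j$ as the divided difference $P[y_0,\dots,y_j]$ and using $y_i-y_l=(i-l)s$ gives
\[
a_j=\frac{1}{s^{j}\,j!}\sum_{i=0}^{j}(-1)^{j-i}\binom{j}{i}P(y_i),\qquad\text{so}\qquad |a_j|\le\frac{2^{j}}{s^{j}\,j!},
\]
because each $P(y_i)\in\{0,1\}$ and $\sum_i\binom{j}{i}=2^{j}$. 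Comparing with $|a_j|\ge 1/j!$ yields $s^{j}\le 2^{j}$, i.e. $s\le 2$. But $j\le k-1$ forces $s=\lfloor 3k/j\rfloor\ge\lfloor 3k/(k-1)\rfloor\ge 3$, a contradiction. Thus $\chi_A^{(k)}$ cannot vanish on all of $I'$.

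I expect the only delicate points to be bookkeeping: verifying that the Newton representation is valid on the full range $[0,3k]$ (so that the $j+1$ spread nodes and the jump offset $x_0$ all lie inside it), and checking $s\ge 3$ down to the smallest admissible cases $k=3$. The divided-difference identity is the engine of the whole argument; it replaces the wasteful length-$2^{k}$ interval of the first proof by one of length $2k+1$, which is precisely what will produce the improved constant $(2k+1)^{-1/p}$ in Theorem 1.
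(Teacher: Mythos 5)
Your proof is correct, and its first half is exactly the paper's: assume $\chi_A^{(k)}$ vanishes on $I'$, deduce $\chi_A^{(i)}(n)=0$ for $k\le i\le 3k$, and use the discrete Taylor (Newton) expansion to identify $\chi_A(n+x)$ for $x\in\{0,1,\dots,3k\}$ with a polynomial $P$ of degree at most $k-1$, non-constant because $I$ contains a jump of $\chi_A$. The difference lies in how the contradiction is closed. The paper counts values: among the $2k+2$ points $x\in\{k-1,\dots,3k\}$, one of the two values $0,1$ is attained at least $k+1$ times, so either $P$ or $P-1$ has more roots than the degree $k-1$ permits; hence $P$ is constant, contradicting the jump. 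This uses only that the values lie in a two-element set. You instead trap the leading coefficient $a_j$ (with $j=\deg P$): from below, $|a_j|\ge 1/j!$ by integrality of $\chi_A^{(j)}(n)$; from above, $|a_j|\le 2^j/(s^j\,j!)$ by the equally-spaced divided-difference identity over the nodes $0,s,\dots,js$ with $s=\lfloor 3k/j\rfloor\ge 3$, forcing $s\le 2$, a contradiction. Your route needs two extra ingredients (integrality of the discrete derivatives and the divided-difference formula), so it is less economical, but it is also more quantitative: it shows that any degree-$j$ polynomial whose leading coefficient is at least $1/j!$ in modulus must exceed $(s/2)^j\ge(3/2)^j$ at one of the spread nodes, a margin that the paper's root-counting argument does not provide. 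Both arguments exploit only the interval length $2k+1$, so both yield the same constant $(2k+1)^{-1/p}$ in Theorem 1.
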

 
 \begin{proof}
 	Suppose $\chi^{(k)}_A$ is zero on all of $\{n,n+1,\ldots,n+2k\}$. Then $\chi^{(m)}_A(n)$ is zero for any $k\leq m\leq 3k.$ Then  for $h\in \{k-1,k,k+1,\ldots, 3k\}$ we have by the  discrete Taylor expansion
 	\begin{equation*}
 		\chi_A(n+h)=\sum_{j=0}^{h}\frac{	\chi_A^{(j)}(n)}{j!}h^{\underline{j}}=\sum_{j=0}^{k-1}\frac{	\chi_A^{(j)}(n)}{j!}h^{\underline{j}}.
 	\end{equation*}
 	So if we set for all real $h$
 	\begin{equation*}
 		P(h)=\sum_{j=0}^{k-1}\frac{	\chi_A^{(j)}(n)}{j!}h^{\underline{j}},
 	\end{equation*}
 	this is a polynomial in the real variable $h$ of degree at most   $k-1$, and it takes the values $0,1$ a total of $2k+2$ times when $h\in \{k-1,k,k+1,\ldots, 3k\} $, which is impossible by the fundamental theorem of algebra, unless it is constant. But as $I$ contains a point from $\partial_rA\cup (\partial_lA-1)$, the polynomial   $P(h)$ takes both values $0,1$ at least once. This contradiction concludes the proof.
 	
 \end{proof}
 
 Employing this proposition we prove Theorem 1 with a better bound.
 
 \begin{proof}
 	Let us partition 
 	\begin{equation*}
 		\Z=\bigcup_{n}I_n, \qquad   \qquad  I_n:=\big\{j\in \Z:  n(2k+1)\leq j <(n+1)(2k+1)\big\}.
 	\end{equation*}
 	Out of these $I_n$,
 	at least $\lceil 2|\partial_rA|/(2k+1)\rceil $  intervals contain elements from $\partial_rA\cup (\partial_lA-1)$.   For each $I_n$ with a $\partial_rA\cup (\partial_lA-1)$ point,   $\chi_A^{(k)}$ is not identically zero on $I_n-(k-1)$. This concludes the theorem for $p=\infty$. For $1\leq p<\infty$ the following concludes the proof
 	\begin{equation*}
 		\|\chi_A^{(k)}\|_{l^p(\Z)}^p=\sum_{n\in \Z}\sum_{j\in I_n-(k-1)}|\chi_A^{(k)}(j)|^p\geq 2|\partial_rA|/(2k+1)\geq (2k+1)^{-1}\|\chi_A'\|_{l^p(\Z)}^p.
 	\end{equation*}

 \end{proof} 
 This is a uniform bound that improves greatly upon the previous uniform bound. But it is still far away from an exponential uniform bound that we suspect should be possible.

 \subsection{Proof of Theorem 2}
 
 Although Theorem 2 gives a very nice exponential bound, because it relies only on endpoints of $A$, the bound is not uniform.
 
\begin{proof} Let $a_l$ be the largest element of $A$.  For any natural number $n\leq k/3$ 
 \begin{equation*}
 	{k\choose n}={k \choose n-1}\frac{k-n+1}{n}>2{k \choose n-1}.
 \end{equation*}
 Therefore 
 \begin{equation*}
 	{k\choose n}-\sum_{j=0}^{\lfloor \frac{n-1}{2} \rfloor}{k\choose n-2j-1}\geq {k\choose n}\big[1 -2^{-1}-2^{-3}-2^{-5}\cdots  \big]=\frac{1}{3}{k\choose n}.
 \end{equation*}
 So we can write
 \begin{equation*}
 	\begin{aligned}
 		|\chi_A^{(k)}(a_l-n)|=\Big| \sum_{j=0}^{k}{k\choose j}(-1)^j\chi_A(a_l-n+k-j)   \Big|=\Big| \sum_{j=k-n}^{k}{k\choose k-j}(-1)^j\chi_A(a_l-n+k-j)   \Big| .
 	\end{aligned}
 \end{equation*}
 After a change of variable we obtain the following that concludes the proof
 \begin{equation*}
 	\begin{aligned}
 		\geq \Big| \sum_{j=0}^{n}{k\choose j}(-1)^{k-j}\chi_A(a_l-n+j)   \Big|  
 		\geq \Big| {k\choose n}-\sum_{j=0}^{\lfloor \frac{n-1}{2} \rfloor}{k\choose n-2j-1}  \Big|  
 		\geq \frac{1}{3}{k\choose n}.
 	\end{aligned}
 \end{equation*}

 \end{proof}

 
\section{The Fourier approach}

We now present the Fourier transform approach that  allows us to prove nonuniform results with growth  as in \eqref{qu}.
 For any complex-valued finitely supported function $f:\Z \rightarrow \C$  the Fourier series 
\[
\widehat{f}(x) = \sum_{n \in \mathbb{Z}} f(n) e^{-2\pi i n x}, \quad x  \in \mathbb{T},
\]
 is a finite sum. Hence, if we consider derivatives
\[
\widehat{f'}(x) = \sum_{n \in \mathbb{Z}} [f(n+1)-f(n)] e^{-2\pi i n x}=[e^{2\pi i  x}-1]\sum_{n \in \mathbb{Z}} f(n) e^{-2\pi i n x}=[e^{2\pi i  x}-1] \widehat{f}(x).
\]
Iterating this
\[
\widehat{f^{(k)}}(x) =[e^{2\pi i  x}-1]^k \widehat{f}(x).
\]
We  observe that 
\begin{equation*}
	\begin{aligned}
		|e^{2\pi ix}-1|=\big[ (\cos 2\pi x -1)^2 +\sin^2 2\pi  x \big]^{\frac{1}{2}}=\big[2-2\cos 2\pi x  \big]^{\frac{1}{2}}= 2\sin \pi x.
	\end{aligned}
\end{equation*}
We have 	$2\sin \pi x   >1 $  if $  1/6<x<5/6$,     and    	$2\sin \pi x  \leq 1$
otherwise. Then for $p\in [1,2]$ using the Hausdorff-Young inequality  
\begin{equation}\label{dv}
	\begin{aligned}
		\|\chi_A^{(k)}\|_{l^{p}(\Z)} \geq  \|\widehat{\chi_A^{(k)}}\|_{L^{p'}(\Tt)}   & =  \Big\||e^{2\pi ix}-1|^k     \sum_{n \in A}  e^{-2\pi i n x}\Big\|_{L^{p'}(\Tt)}  \\ & =  \big\| \big[2\sin \pi x \big]^k   \widehat{\chi_A}\big\|_{L^{p'}(\Tt)} \\ &\geq   \big\| \big[2\sin \pi x \big]^k   \widehat{\chi_A}\big\|_{L^{p'}((\frac{1}{2}-r,\frac{1}{2}+r))},
	\end{aligned}
\end{equation}
for some small $r>0.$  Taking this as a starting point we will speculate about the dynamics of $\|\chi_A^{(k)}\|_{l^p(\Z)}$ as $k\rightarrow \infty.$ 
 Multiplication by $2\sin \pi x $  boosts the  exponential polynomial around $1/2$, while around $0$   suppresses it. But any exponential polynomial has around zero a small interval where the size  is like $|A|$, while on the rest of $\Tt$ it is much smaller, and on average it is like $|A|^{1/2}$.  Therefore the norm  of the product depends very much on whether the norm of the exponential polynomial concentrates.  

If a set $A$ is sparse, then the orthogonality behaviour of the characters gets even stronger, and they behave as if  independent random variables, sum of which with large probability concentrates around the mean.  So, for such sets $L^p(\Tt)$ norm is dominated by  the rest of $\Tt$ up to some $1<q\leq \infty$, but then a small neighborhood of the origin dominates. So, for such a set   multiplying by $2\sin \pi x $ boosts the norms. 

But for a tightly concentrated set $A$, such as an interval, the concentration around $0$ can be very strong, and the function can be like $|A|$ for a neighborhood of size $\approx |A|^{-1}$, and like $1$ for the rest of $\Tt.$ In that case the neighorhood of $0$ dominates for any  $p\in (1,\infty]$. For such a function multiplying with $|e^{2\pi i x}-1|$ erases the contribution from the neighborhood of  $0$, as $|e^{2\pi i x}-1|\approx x$ for small $x$, which means on  a $|A|^{-1}$ size neighborhood of $0$ it is like $|A|^{-1},$ reducing the values of exponential polynomial from $|A|$ to $1$ at once. 
Contribution from the rest of the torus is boosted. But as this boosting is by at most 2, while the dominating piece is essentially  demolished, the norm significantly decreases after multiplication by $|e^{2\pi i x}-1|$ once.  After this point, if we keep multiplying by $|e^{2\pi i x}-1|$, since contribution from the neighborhood of zero is already gone, there is no more loss from this part. So norm can be expected to grow after this point. All this is very clear if we take $A$ to be an interval. 

In short, the Fourier approach demonstrates that,  for an arbitrary  set $A,$ while $\|\chi_A'\|_{l^p(\Z)}$ cannot be expected to dominate $\|\chi_A\|_{l^p(\Z)}$, it may be expected that $\|\chi_A^{(k)}\|_{l^p(\Z)}/\|\chi_A'\|_{l^p(\Z)}$ will grow    exponentially.

We now continue from \eqref{dv}. 
We observe that  on $(\frac{1}{2}-r,\frac{1}{2}+r)$ we have 
\begin{equation*}
	|e^{2\pi i  x}-1| \geq |\cos 2\pi (\frac{1}{2}-r) -1|\geq |\cos (\pi-2\pi r) -1|.
\end{equation*}
By Taylor's theorem
\begin{equation*}
	\cos (\pi-2\pi r)= -1+2\pi^2r^2-\frac{4}{3}\pi^3\sin(\pi-2\pi s) r^3, \qquad  \text{for \ some }\quad   0\leq s\leq r.
\end{equation*}
Plugging this in
\begin{equation*}
	|-2+2\pi^2r^2-\frac{4}{3}\pi^3\sin(\pi-2\pi s) r^3|\geq 2-2\pi^2r^2.
\end{equation*}
Hence  the last term of  \eqref{dv} satisfies 
\begin{equation}\label{dv2}
\geq 2^k(1-\pi^2r^2)^k  \Big\|   \sum_{n \in A}  e^{-2\pi i n x}\Big\|_{L^{p'}((\frac{1}{2}-r,\frac{1}{2}+r))}.
\end{equation}
To continue after this point we have three methods: the Nazarov-Tur\'{a}n bound, the Borwein-Erd\'{e}lyi bound, and the Borwein-Erd\'{e}lyi-K\'{o}s observation.  The first two of these are valid for any interval, while the last is specific to intervals around $1/2$. We will apply each of these in its own subsection below.

\subsection{The Nazarov-Tur\'{a}n bound} This is a  Remez-type inequality that Tur\'{a}n \cite{pt} proved for intervals, generalized by Nazarov \cite{naz1} to arbitary measurable subsets.  Let  $A\subset \Z$
be a finite nonempty set, and $E\subseteq \Tt$ be a measurable set. Then the Nazarov-Tur\'{a}n bound states that for complex coefficients $a_n$, 
\begin{equation}
	\Big\|  \sum_{n\in A}a_ne^{2\pi i n x}	\Big\|_{L^{\infty}(\Tt)}\leq \Big(\frac{14e}{|E|} \Big)^{|A|-1}	\Big\|  \sum_{n\in A}a_ne^{2\pi i n x}	\Big\|_{L^{\infty}(E)}.
	\end{equation}
With this at hand we are ready to  prove Theorem 3.

\begin{proof}[Proof of Theorem 3]

If $|A|=1$, then  \eqref{nb2} follows from \eqref{nb1}, and \eqref{nb1} follows from the trivial $p=1$ case and  \eqref{hol}. So we assume $|A|>1.$
With the Nazarov-Tur\'{a}n bound \eqref{dv2} satisfies
\begin{equation}\label{nt}
	\begin{aligned}
	\|\chi_A^{(k)}\|_{l^{1}(\Z)}   \geq 2^{k}(7e)^{1-|A|}(1-\pi^2r^2)^kr^{|A|-1}  	|A|.
	\end{aligned}
\end{equation}
We want to choose $r$ so as to maximize $(1-\pi^2r^2)^k r^{|A|-1}$. For notational simplicity, and for future use, we will maximize $(1-\pi^2r^2)^k r^{\alpha}$ for any $\alpha>0$, and $r\in(0,\pi^{-1}).$
This function is zero at $r=0,\pi^{-1}$
and attains its maximum on $[0,\pi]$  at
\begin{equation}\label{alp-}
r=\frac{1}{\pi}\Big[ \frac{\alpha}{2k+\alpha}  \Big]^{1/2},
\end{equation}
and this maximum is given by 
\begin{equation}\label{alp}
\frac{1}{\pi^{\alpha}}\Big[\frac{2k}{2k+\alpha}   \Big]^k  \Big[\frac{\alpha}{2k+\alpha}   \Big]^{\frac{\alpha}{2}}.
\end{equation}
So the  right hand side of \eqref{nt} satisfies
\begin{equation}\label{nt2}
	\begin{aligned}
	\geq 2^k\Big(\frac{1}{7\pi e}  \Big)^{|A|-1}\Big[\frac{2k}{2k+|A|-1}   \Big]^k  \Big[\frac{|A|-1}{2k+|A|-1}   \Big]^{\frac{|A|-1}{2}}|A|.
	\end{aligned}
\end{equation}

For a fixed $\alpha$ we can compute the limits of two factors in \eqref{alp} as $k\rightarrow \infty$.
We observe by using the Taylor expansion at 0 for $\log (1+x),$ 
\begin{equation*}\label{lim1}
	\begin{aligned}
	\lim_{k\rightarrow \infty} \log \Big[\frac{2k}{2k+\alpha}   \Big]^k  =	\lim_{k\rightarrow \infty}k  \Big[-\frac{\alpha}{2k+\alpha}-  \frac{1}{2}\Big(\frac{\alpha}{2k+\alpha}\Big)^2+\ldots \Big] =  -\frac{\alpha}{2}.
	\end{aligned}
\end{equation*}
  So we have
\begin{equation*}\label{lim2}
	\begin{aligned}
		\lim_{k\rightarrow \infty}  \Big[\frac{2k}{2k+\alpha}   \Big]^k =e^{-\frac{\alpha}{2}}.
	\end{aligned}
\end{equation*}
On the other hand  
\begin{equation*}\label{lim3}
	\begin{aligned}
	\Big[\frac{\alpha}{2k+\alpha}   \Big]^{\frac{\alpha}{2}}= \Big[ \frac{1}{2k}\Big]^{\frac{\alpha}{2}}	\Big[\frac{\alpha}{1+\frac{\alpha}{2k}}   \Big]^{\frac{\alpha}{2}}=2^{-\frac{\alpha}{2}\log_2 2k}(\alpha)^{\frac{\alpha}{2}}\Big[\frac{2k}{2k+\alpha}   \Big]^{\frac{\alpha}{2}}.
	\end{aligned}
\end{equation*}
The last factor goes to 1 as $k\rightarrow \infty$. Thus   \eqref{nt2} satisfies as $k\rightarrow \infty$
\begin{equation}\label{lim4}
	\begin{aligned}
\sim 	2^{k-\frac{|A|-1}{2}\log_2 2k}\Big(\frac{\sqrt{|A|-1}}{7\pi e^{3/2} }  \Big)^{|A|-1}|A|.
	\end{aligned}
\end{equation}
This proves \eqref{nb1} for $p=1$. For higher $p$ it follows from \eqref{hol}.

The Nazarov-Tur\'{a}n bound can also be used via the boundary of $A$ as follows. 

\begin{equation*}\label{nt3}
	\begin{aligned}
		\big\| [e^{2\pi i  x}-1]^{k+1}  \widehat{\chi_A}\big \|_{L^{\infty}((\frac{1}{2}-r,\frac{1}{2}+r))}&= \big\| [e^{2\pi i  x}-1]^{k} \Big[ \sum_{n \in \partial_{l}A-1}e^{-2\pi i n x} -\sum_{n \in \partial_rA} e^{-2\pi i n x}\Big]\big \|_{L^{\infty}((\frac{1}{2}-r,\frac{1}{2}+r))}   \\&\geq (2-2\pi^2r^2)^{k}\Big\|   \sum_{n \in \partial_{l}A-1}e^{-2\pi i n x} -\sum_{n \in \partial_rA} e^{-2\pi i n x}\Big\|_{L^{\infty}((\frac{1}{2}-r,\frac{1}{2}+r))}\\ &\geq 2^{k}(7e)^{1-2|\partial_lA|}(1-\pi^2r^2)^{k}r^{2|\partial_{l}A|-1}  	2|\partial_{l}A|.
	\end{aligned}
\end{equation*}
The  same steps then will then  yield \eqref{nb2}.

\end{proof}

It does not seem possible to reach a uniform estimate from this method, as picking a small $k$ like $k=10$ and picking $|A|$ large we see plainly that the constant in \eqref{nt2}  is extremely small  and determined essentially by the term  $(7\pi e)^{1-|A|}$, which  is impossible to bound  below  with a function depending on  $k$.

\subsection{ The  Borwein-Erd\'{e}lyi  bound}

There is another Remez-type inequality due to Borwein and Erd\'{e}lyi \cite{be}  that can be applied in our situation. Let 

\begin{equation*}
\mathcal{F}:=\Big\{ \sum_{j=0}^na_jz^j:   a_j\in \{ -1,0,1  \}  \Big\}.
\end{equation*}
Then the Borwein-Erd\'{e}lyi result gives for an arc $E\subset \Tt$, and an absolute constant $c$
\begin{equation*}
	\inf_{\substack{P\in \mathcal{F}\setminus \{0\}}}\|P\|_{L^{\infty}(E)}\geq e^{-c/|E|}.
\end{equation*}
Here there is no dependency on the degree of the polynomial. We may assume  $c\geq 1$  as otherwise it can be replaced by 1. 

\begin{proof}[Proof of Theorem 4]

We replace the Borwein-Erd\'{e}lyi bound into \eqref{nt} to obtain
\begin{equation}\label{be}
	\begin{aligned}
		\big\| {\chi_A^{(k)}}\big \|_{l^{1}(\Z)}  \geq 2^k(1-\pi^2r^2)^k e^{-\frac{c}{2r}}.
	\end{aligned}
\end{equation}
 We want  to maximize $(1-\pi^2r^2)^ke^{-\frac{c}{2r}}$. This function vanishes at $r=\pi^{-1}$ and as $r\downarrow 0$,
and differentiating it we get
\begin{equation*}
	-(1-\pi^2r^2)^{k-1}e^{-\frac{c}{2r}}(2r)^{-2}\big[4k\pi^2r^3+  c\pi^2r^2-c  \big]=-(1-\pi^2r^2)^{k-1}e^{-\frac{c}{2r}}(2r)^{-2}\phi(r).
\end{equation*}
As the factors except $\phi(r)$ are plainly nonzero on $(0,\pi^{-1})$, we just need to seek zeros of $\phi$ on this interval. As it is a strictly increasing here with $\phi(0)=-c, \ \phi(\pi^{-1})=4k/\pi,$ it has a single zero. 
We will give a pretty good approximation for the root, though we will not solve for it exactly. 
 We observe that  if  $4kr < Cc$ for some large constant $C$, then for $k$ large enough
\begin{equation*}
\phi(r)	<c\Big[\pi^2(C+1)\frac{C^2c^2}{16k^2}-1\Big]<0.
\end{equation*}
 So the  zero of the function cannot lie in the interval 
$\big (0,{Cc}(4k)^{-1}\big).$
Outside this interval we have 
\begin{equation*}
 4k\pi^2r^3	\leq 4k\pi^2r^3 + c\pi^2r^2  \leq (1+C^{-1}) 4k\pi^2r^3.
\end{equation*} 
So the zero of our function is  
\begin{equation*}
(1-\varepsilon)\Big(\frac{c}{4k\pi^2}  \Big)^{1/3}.
\end{equation*}
for some small $\varepsilon>0$. As an approximation, we just take $\varepsilon=0$, and plug   this  in  to get 
\begin{equation*}
\sup_{r\in [0,\pi^{-1}]}	2^k(1-\pi^2r^2)^k e^{-\frac{c}{2r}}\geq 2^k\Big[1-\pi^2\Big(\frac{c}{4k\pi^2}  \Big)^{\frac{2}{3}}\Big]^k e^{-c^{\frac{2}{3}}(k\pi^2)^{\frac{1}{3}}2^{-\frac{1}{3}}}.
\end{equation*}
We have
\begin{equation*}
 \begin{aligned}
 \log\Big[1-\pi^2\Big(\frac{c}{4k\pi^2}  \Big)^{\frac{2}{3}}\Big]^k =k \log\Big[1-\pi^2\Big(\frac{c}{4k\pi^2}  \Big)^{\frac{2}{3}}\Big]&=k
 \Big[ -\pi^2\Big(\frac{c}{4k\pi^2}\Big)^{\frac{2}{3}}+ \mathcal{O}(k^{-4/3}) \Big]\\ &= 
 -\Big(\frac{c\pi}{4}\Big)^{\frac{2}{3}}k^{\frac{1}{3}}+ \mathcal{O}(k^{-1/3}). 
 \end{aligned}
\end{equation*}
This means 
\begin{equation*}
	\begin{aligned}
		\Big(1-\pi^2\Big(\frac{c}{4k\pi^2}  \Big)^{\frac{2}{3}}\Big)^k \sim 2^
		{-(\log_2e)\big(\frac{c\pi}{4}\big)^{\frac{2}{3}}k^{\frac{1}{3}}}. 
	\end{aligned}
\end{equation*}
Plugging this  in we  get the following asypmtotics:
\begin{equation*}
 2^k\Big[1-\pi^2\Big(\frac{c}{4k\pi^2}  \Big)^{\frac{2}{3}}\Big]^k e^{-c^{\frac{2}{3}}(k\pi^2)^{\frac{1}{3}}2^{-\frac{1}{3}}}\sim 2^{k-3(\log_2e)\big(\frac{c\pi}{4}\big)^{\frac{2}{3}}k^{\frac{1}{3}}}.
\end{equation*}
This together with \eqref{hol} finishes \eqref{be1}.

\end{proof}

\subsection{Borwein-Erd\'{e}lyi-K\'{o}s observation}

As we have seen,  it is of great importance to estimate  $|\widehat{\chi_A}(x)|$ around
  $x=1/2$. Indeed, we will look at the  simpler exponential sum
\begin{equation*}
	P_A(x):=  \sum_{n \in A_*}  e^{2\pi i n x},   
\end{equation*}
where $A_*$ is obtained by shifting the leftmost element  of $A$ to $0.$ Then
\begin{equation*}
	|P_A(x)|= \Big| \sum_{n \in A_*}  e^{2\pi i n x} \Big|  =\Big| \sum_{n \in A}  e^{2\pi i n x} \Big| = |\widehat{\chi_A}(-x)|,
\end{equation*}
and by periodicity 
\begin{equation*}
	\big|\widehat{\chi_A}\big(1/2+r\big)\big|=\big|P_A(1/2-r)\big|.
\end{equation*}
As the function $P_A$ is real analytic, a very good way of  estimating  its size in a small neighborhood of $1/2$ is the Taylor expansion around this point.  So 
\begin{equation*}
	P_A(x)= \sum_{j=0}^{\infty}P_A^{(j)}(1/2)\frac{(x-1/2)^{j}}{j!}. 
\end{equation*}
Since for $x$ close enough to $1/2$ the lowest order nonzero term of this series will dominate, we would like to understand  up to  how many orders $P_A^{(j)}(1/2)$ may vanish.  We have
\begin{equation*}
	P_A^{(j)}(x)	= (2\pi i )^j\sum_{n\in A_*}n^j e^{2\pi inx},  \qquad  P_A^{(j)}(1/2)	= (2\pi i)^j\sum_{n\in A_*}(-1)^nn^j.
\end{equation*}
For this to vanish up to order $J$ we must have
\begin{equation}\label{pte1}
	\sum_{n\in A_*}(-1)^nn^j=0 \qquad    0\leq j\leq J.
\end{equation}
As mentioned in the introduction, this is a special version of the  Prouhet-Tarry-Escott problem.

We may also view $P_A$ as restriction of a complex polynomial to the torus:

\begin{equation*}
	P_A(x)= \sum_{n \in A_*}  e^{2\pi i n x}=\sum_{n \in A_*}  z^{n}\Big|_{\Tt}=P_A(z)\Big|_{\Tt}.
\end{equation*}
But we  expand this polynomial around $-1$ to obtain
\begin{equation*}
	P_A(z)=\sum_{n \in A_*}  z^{n}=  \sum_{n=0}^{\infty} P_A^{(j)}(-1)\frac{(z+1)^j}{j!} .
\end{equation*}
So  we should look at derivatives of this complex series at $-1.$ 
\begin{equation*}
	P_A^{(j)}(-1)=\sum_{n \in A_*}  (-1)^{n-j}n^{\underline{j}}.
\end{equation*}
 For this to vanish up to order $J$ we must have
\begin{equation}\label{pte2}
	\sum_{n \in A_*} (-1)^{n}n^{\underline{j}} =0  \qquad    0\leq j\leq J.
\end{equation}
This holds if and only if \eqref{pte1} holds. Also it holds if and only if 
\begin{equation}\label{pte3}
	P_A(z)=(z+1)^{J+1}G(z),
\end{equation}
where $G$ is an integer coefficient complex polynomial. From this a good bound on $J$ can be deduced by a short elegant argument  of Borwein, Erd\'{e}lyi, K\'{o}s  \cite{bek}. Plugging in $1$ to both sides of \eqref{pte3},
\begin{equation}\label{pte4}
	|A|=P_A(1)=2^{J+1}G(1).
\end{equation}
As $G$ has integer coefficients we have $|G(1)|\geq 1.$ So 
\begin{equation*}\label{}
	|A|=2^{J+1}|G(1)| \geq 2^{J+1},
\end{equation*}
implying that $J\leq -1+\log_2|A|.$

This gives for fixed $A$ an asymptotics for $\|\chi_A^{(k)}\|_p$ that is better than what we obtained from the Nazarov-Tur\'{a}n or the Borwein-Erd\'{e}lyi methods. For those bounds are devised for arbitrary arcs of $\Tt$, while this method is optimal for an arc around $1/2.$ As there is  a number $a\leq \log_2|A|$ for which $P_A^{(a)}(1/2)\neq 0$,  we have
for $x=1/2+r$
\begin{equation}\label{pte5}
|\widehat{\chi_A}(1/2+r)|=	|P_A(1/2-r)|=\Big|P^{(a)}_A(1/2)\frac{(-r)^a}{a!}+P^{(a+1)}_A(1/2-\rho)\frac{(-r)^{a+1}}{a+1!}\Big|,
\end{equation} 
where $0\leq |\rho|\leq |r|$. Now by \eqref{pte1} the coefficient $|P^{(a)}_A(1/2)|$ can be as small as $(2\pi)^a$, while $|P^{(a+1)}_A(1/2-\rho)|$ can be as large as $(2\pi)^{a+1}|A|(\max A_*)^{a+1}$. So although the first term in the right hand side of \eqref{pte5} dominates as $r\downarrow0$, this happens after a point that depends on the set $A.$  Let $r_A>0$ be such that for $|r|\leq r_A$ the second term is at most   half of the first term. For such $r$,
\begin{equation*}\label{}
	|\widehat{\chi_A}(1/2+r)|=	|P_A(1/2-r)|\geq 2^{a-1}\pi^a\frac{(-r)^a}{a!}.
\end{equation*}

We will plug this information into the process we illustrated for the  Nazarov-Tur\'{a}n, and the Borwein-Erd\'{e}lyi bounds to prove Theorem 5. 

\begin{proof}[Proof of Theorem 5]

Taking $r<r_A$, 
\begin{equation}\label{pte6}
	\|\chi_A^{(k)}\|_{l^1(\Z)} \geq 2^k(1-\pi^2r^2)^k\big\|   \widehat{\chi_A} \big\|_{L^{\infty}((\frac{1}{2}-r,\frac{1}{2}+r))}\geq 2^k\frac{2^{a-1}\pi^a}{a!}(1-\pi^2r^2)^kr^a.
\end{equation}
We may assume $a>0$, as otherwise the picking $r=0$ above suffices. This time we   maximize $(1-\pi^2r^2)^k r^{a}$ on $[0,\pi^{-1}].$ The  maximum is attained at \eqref{alp-} with $\alpha=a$. For $k$ large enough this will be less than $r_A.$   The maximum   is \eqref{alp} with $\alpha=a$.
So the optimal value of the  constant above is 
\begin{equation}\label{pte8}
	\begin{aligned}
		=2^k\frac{2^{a-1}}{a!}\Big[\frac{2k}{2k+a}   \Big]^k  \Big[\frac{a}{2k+a}   \Big]^{\frac{a}{2}}.
	\end{aligned}
\end{equation}
Applying the limits we computed in proof of Theorem 4,    \eqref{pte8} satisfies the following asymptotics, that yield  \eqref{bek1} when combined with \eqref{hol}: 
\begin{equation}\label{pte9}
	\begin{aligned}
		\sim 2^k\frac{2^{a-1}}{a!}e^{-\frac{a}{2}}2^{-\frac{a}{2}\log_22k}a^{\frac{a}{2}} =2^{k-\frac{a}{2}\log_22k+a-1} \Big[\frac{a}{e}\Big]^{\frac{a}{2}}\frac{1}{a!}.
	\end{aligned}
\end{equation}

\end{proof}

For certain classes of polynomials the bound $\log_2|A|$ can be replaced by $\log_2|\partial A|$. Let us consider a set $A$ that is composed of intervals of equal length, that is 
\begin{equation*}
	A=\bigcup_{j=1}^n \big\{ a_j,a_j+1,\ldots,a_j+b \big\},
\end{equation*}
where $a_j+b<a_{j+1}.$ Then $A_*$ will shift this set so that $a_1$ will be shifted to $0.$ Let $a_j'$ be shifted $a_j$. Then
\begin{equation*}
	P_A(z)= \sum_{j=1}^n  z^{a_j'}+z^{a'_j+1}+\ldots+z^{a'_j+b} =\big[  1+z+z^2+\ldots +z^b\big]\Big[\sum_{j=1}^n  z^{a_j'}\Big].
\end{equation*}
The first factor in this product does not vanish at $-1$ if $b$ is even, and it vanishes only to first order if $b$ is odd.  The second factor can contain at most   $\log_2 n$  copies of $(z+1).$ So the polynomial $P_A$ can contain at most $1+\log_2n$ copies of $z+1$. But $n=\partial_lA=\partial_rA$. We can then insert this bound into  \eqref{pte9} to essentially replace $\log_2 |A|$ by $\log_2 |\partial A|.$  Whether this is possible for any set $A$ is a very interesting albeit  challenging problem.

\subsection{Sparse sets}

We prove Theorem 6, which gives a uniform result for sparse sets. The method of proof is different from the last three theorems we proved in this section.

\begin{proof}
	We consider 
		\begin{equation*}\label{}
		\begin{aligned}
			\Big\| [e^{2\pi i  x}-1]^k  \sum_{n \in A}  e^{-2\pi i n x} \Big\|_{L^2((\frac{1}{4},\frac{3}{4}))} \geq 2^{k/2}	\Big\|  \sum_{n \in A}  e^{-2\pi i n x} \Big\|_{L^2((\frac{1}{4},\frac{3}{4}))}.
		\end{aligned}
	\end{equation*} 
But 
	\begin{equation*}\label{}
		\begin{aligned}
		\Big\|  \sum_{n \in A}  e^{-2\pi i n x} \Big\|_{L^2((\frac{1}{4},\frac{3}{4}))}^2=\int_{\frac{1}{4}}^{\frac{3}{4}}\Big[|A|+\sum_{\substack{m,n\in A \\ m\neq n}}{e^{2\pi i x (n-m)}}   \Big] dx&=\frac{|A|}{2}+\int_{\frac{1}{4}}^{\frac{3}{4}}\sum_{\substack{m,n\in A \\ m<n }}{2\cos 2\pi(n-m)x}   dx    \\  &   =\frac{|A|}{2}+\sum_{\substack{m,n\in A \\ m<n }}\frac{\sin 2\pi(n-m)x}{\pi (n-m)}\Big|_{\frac{1}{4}}^{\frac{3}{4}}.     
		\end{aligned}
	\end{equation*}
	Now applying our assumption in the theorem we get 
	\begin{equation*}\label{}
		\begin{aligned}
		   \geq \frac{|A|}{2}-\sum_{\substack{m,n\in A \\ m<n }}\frac{2}{\pi (n-m)}\geq \frac{|A|}{2}- \frac{|A|}{4}=\frac{|A|}{4}.    
		\end{aligned}
	\end{equation*}
	Then we have for $p\geq 2$, as in \eqref{hol},

	\begin{equation*}
		\begin{aligned}
			\big\|\chi_A^{(k)}\big \|_{l^{p}(\Z)}\geq 	\big[|A|(k+1)\big]^{\frac{1}{p}-\frac{1}{2}}	\big\|\chi_A^{(k)}\big \|_{l^{2}(\Z)}&= 	\big[|A|(k+1)\big]^{\frac{1}{p}-\frac{1}{2}}	\big\|\widehat{\chi_A^{(k)}}\big \|_{L^{2}(\Tt)}\\  &\geq 	\big[|A|(k+1)\big]^{\frac{1}{p}-\frac{1}{2}} 	\Big\| [e^{2\pi i  x}-1]^k  \sum_{n \in A}  e^{-2\pi i n x} \Big\|_{L^2((\frac{1}{4},\frac{3}{4}))} \\ & \geq\big[|A|(k+1)\big]^{\frac{1}{p}-\frac{1}{2}}  2^{\frac{k}{2}-1}|A|^{\frac{1}{2}}	\\ &= (k+1)^{\frac{1}{p}-\frac{1}{2}}  2^{\frac{k}{2}-1}	\big\|\chi_A\big \|_{l^{p}(\Z)}.
		\end{aligned}
	\end{equation*}

For $1\leq p <2$ we observe that for any $f\in l^{\infty}(\Z) $ 
\begin{equation*}
		\big\|f\big \|_{l^{2}(\Z)}\leq \big\|f\big \|_{l^{p}(\Z)}^{\frac{p}{2}}\big\|f\big \|_{l^{\infty}(\Z)}^{1-\frac{p}{2}}.
\end{equation*}
	Utilizing this we finish the proof:
		\begin{equation*}
		\begin{aligned}
			\big\|\chi_A^{(k)}\big \|_{l^{p}(\Z)}\geq 	\big\|\chi_A^{(k)}\big \|_{l^{\infty}(\Z)}^{1-\frac{2}{p}}	\big\|\chi_A^{(k)}\big \|_{l^{2}(\Z)}^{\frac{2}{p}}\geq 	2^{k\big(1-\frac{2}{p}\big)}	\big\|\widehat{\chi_A^{(k)}}\big \|_{L^{2}(\Tt)}^{\frac{2}{p}}	&\geq 	2^{k\big(1-\frac{2}{p}\big)} 2^{\big(\frac{k-2}{p}\big) }|A|^{\frac{1}{p}}\\	&= 2^{k-\frac{k+2}{p}}	\big\|\chi_A\big \|_{l^{p}(\Z)}.
		\end{aligned}
	\end{equation*}

\end{proof}

\section{Acknowledgements}
The results in this article will constitute a part of the second author's PhD thesis.

\end{document}